\begin{document}
\newcommand{\SymL}{\mathbf{L}^{\bullet}}
\newcommand{\QuadL}{\mathbf{L}_{\bullet}}

\newcommand{\holim}[1][]{\mathop{\mathrm{holim}}\limits_{#1}}
\newcommand{\hocolim}[1][]{\mathop{\mathrm{hocolim}}\limits_{#1}}

\newcommand{\holimtwo}[2]{\mathop{\mathrm{holim}}_{\substack{{#1}\\{#2}}}}
\newcommand{\limtwo}[2]{\mathop{\mathrm{lim}}_{\substack{{#1}\\{#2}}}}
\newcommand{\spectrum}[1]{\mathbf{#1}}
\newcommand{\catfont}[1]{\mathrm{#1}}
\newcommand{\Zn}[1][n]{\mathbb{Z}[x]/x^{#1}}
\newcommand{\Hom}{\mathrm{Hom}}
\newcommand{\cell}{\mathrm{cell}}
\newcommand{\Mor}{\mathcal{M}\mathrm{or}}
\newcommand{\SWh}{\odot} %Spanier-Whitehead product

\newcommand{\coker}{\mathop{\mathrm{coker}}}
\newcommand{\Nil}{{\mathrm{Nil}}}
\newcommand{\End}{{\mathrm{End}}}
\newcommand{\A}{\mathrm{A}}

\newcommand{\Obj}{\mathrm{Obj}}
\newcommand{\cSP}{\mathrm{Spaces}^c_*}
\newcommand{\Set}{\mathrm{Set}}
\newcommand{\coRarrow}{\rightarrowtail}
\newcommand{\wRarrow}{\xrightarrow{\simeq}}
\newcommand{\cowRarrow}{\stackrel{\simeq}{\rightarrowtail}}
\newcommand{\quotRarrow}{\twoheadrightarrow}
\newcommand{\dtilde}[1]{\widetilde{\widetilde{#1}}}
\newcommand{\stilde}[1]{\widetilde{#1}}
\newcommand{\Cone}{\mathrm{Cone}}

\newcommand{\TT}{\mathbb{T}}
\newcommand{\ZZ}{\mathbb{Z}}
\newcommand{\CC}{\mathbb{C}}
\newcommand{\NN}{\mathbb{N}}
\newcommand{\DD}{\mathbb{D}}

\newcommand{\xrightleftarrows}[2]{\mathrel{\mathop{\rightleftarrows}^{#1}_{\mathrm{#2}}}}
\newcommand{\xleftrightarrows}[2]{\mathrel{\mathop{\leftrightarrows}^{#1}_{\mathrm{#2}}}}

\newcommand{\Proj}{\mathbb{P}}
\newcommand{\hofib}{\mathrm{hofib}}
\newcommand{\hocofib}{\mathrm{hocofib}}
\newcommand{\id}{\mathrm{id}}
\newcommand{\cyl}{\mathrm{cyl}}
\newcommand{\Map}{\mathop{Map}}

\title{The $K$-Theory of Endomorphisms of Spaces}
\author{Filipp Levikov}
\email{filipp.levikov@fu-berlin.de}%
\address{Freie Universität Berlin\\Mathematisches Institut\\Arnimallee 7\\14195 Berlin\\Germany}%
\thanks{The work carried out in the article was partially supported by the Humboldt Award of Michael Weiss.}

\subjclass[2010]{19D10,19D35,55N15}

\keywords{$K$-theory of endomorphisms, algebraic $K$-theory of spaces, non-linear projective line}%
\begin{abstract}
We prove a non-linear version of a theorem of Grayson which is an analogue of the Fundamental Theorem of Algebraic $K$-theory 
and identify the $K$-theory of the endomorphism category over a space $X$ in terms of reduced $K$-theory of a certain localisation of the category of $\NN$-spaces over $X$. In particular we generalise the result of Klein and Williams. 
\end{abstract}
\date{\today}
\maketitle

\theoremstyle{definition}
\newtheorem{definition}{Definition}[section]
\newtheorem{example}[definition]{Example}%[section]
\newtheorem{remark}[definition]{Remark}%[section]
\newtheorem{corollary}[definition]{Corollary}%[section]
\newtheorem{theorem}[definition]{Theorem}%[section]
\theoremstyle{plain}
\newtheorem{lemma}[definition]{Lemma}%[section]
\newtheorem{oplemma}[definition]{! Lemma}%[section]
\newtheorem{proposition}[definition]{Proposition}%[section]

\section{Introduction}
The fundamental theorem of algebraic $K$-theory of rings consists of two statements. The Bass-Heller-Swan splitting describes the $K$-theory of the Laurent polynomial extension of a ring $R$
\[
K_i(R)[t,t^{-1}]\cong K_i(R)\oplus K_{i-1}(R)\oplus NK_i(R)\oplus NK_i(R) 
\] 
with $NK_i(R)$ given by the kernel of the projection $K(R[t])\rightarrow K(R)$. The second statement gives a $K$-theoretic interpretation of $NK_i(R)$ in terms of $K$-theory of the exact category of nilpotent endomorphisms of projectives over $R$
\[
NK_i(R)\cong\Nil_{i-1}(R).
\]
More generally one can consider the $K$-theory of the endomorphism category \linebreak $\End(R)$ of projectives over $R$. The investigation of the corresponding $K_0$ goes back to Almkvist (\cite{Almk74}). Grayson (\cite{Gra77}) interpeted it as $K$-theory of a certain localisation of the polynomial ring $R[t]$.
Grayson also realised that the Nil-description and the End-description can be given at once by introducing a parameter $S$ consisting of a multiplicative set of centric monic polynomials in $R[t]$ (containing $t$):
\begin{equation}\label{Eq:Grayson's Theorem}
 \tilde{K}_i(\End_S(R))\cong \tilde{K}_{i+1}\big(\tilde{S}^{-1}R[t]\big). 
\end{equation}
In \cite{FTI}, the authors prove a Bass-Heller-Swan decomposition for \linebreak Waldhausen's algebraic $K$-theory of spaces, alias $A$-theory. Together with the identification of the nil-terms in \cite{FTIII} this has every right to be called the \textit{Fundamental theorem of algebraic $K$-theory of spaces}.   Recently, there has been interest in $K$-theory of parametrised endomorphisms (\cite{Bet05}, \cite{LMc12}) and its connection to topological cyclic homology (\cite{Hesselholt96}, \cite{BS05}). The preprint \cite{BGT} offers a more conceptual point of view and deals with the representability question. In this paper we have a much more modest goal of providing the $A$-theoretic analogue of \eqref{Eq:Grayson's Theorem}. This is Theorem \ref{Main Theorem}. In a sequel to this paper we will bring involutions into the picture and deal with the question of equivariance.
\section*{acknowledgements}  
The result obtained in this article is based on a part of my PhD thesis written at the University of Aberdeen under supervision of Michael Weiss. I am very grateful for his ongoing support. 
\section{Definitions and Main Theorem}
\subsection{}Let $M_\bullet$ be a simplicial monoid with realisation $M=|M_\bullet|$. Consider the category $\mathbb{T}(M)$ of based spaces with a (left) action by $M$ and morphisms given by $M$-maps, i.e. maps commuting with the $M$-action. Define an $M$-cell of dimension $n$ to be 
\[
D^n\times M.
\]
Given an object $Z\in\TT(M)$ and an equivariant map $\alpha:S^{n-1}\times M\rightarrow Z$ define a {cell attachment} by the pushout
\[
Z\cup_\alpha(D^n\times M).
\]
A map $Y\rightarrow Z$ in $\TT(M)$ will be called a \textit{cofibration} if $Z$ is a retract of cell attachments on $Y$. It will be called a \textit{weak equivalence} if it is a weak homotopy equivalence in the ordinary sense. Denote by $\CC(M)$ the full subcategory of $\TT(M)$ consisting of cofibrant objects, i.e. made out of $\ast$ by successive cell attachments. There are several notions of finiteness. An object of $\CC(M)$ is called \textit{finite} if it is built out of finitely many cells. It is called \textit{homotopy finite} if there exists a weak equivalence to a finite object. It is called \textit{finitely dominated} if it is a retract of a homotopy finite objcet. There are inclusions of categories
\[
\CC_f(M)\subset\CC_{hf}(M)\subset\CC_{fd}(M)\subset\CC(M)\subset\TT(M) 
\]
where the indices read as finite, homotopy finite and finitely dominated respectively. At some point we will also need the corresponding category of \textit{stably finitely dominated} objects. This will be indicated by the index $sfd$.

The above subcategories of $\CC(M)$ along with the specified classes of cofibrations and weak equivalences form Waldhausen categories and we can apply\linebreak Waldhausen's S-construction to the respective subcategories of weak equivalences. These will be denoted by a prefix $w$. This leads to the definition of $A$-theory in the sense  of \cite{Wald85} for various values of $?$
\[
A^?(\ast,M)=\Omega|wS_\bullet\CC_?(M)|. 
\]
For a based connected space $X$, one sets $M_\bullet$ to be its Kan loop group $G_\bullet=\Omega_\bullet(X)$. In this case the above definition is one definition of $A^?(X)$ - of the algebraic $K$-theory of $X$. 

\medskip

\subsection{}
Let $Y$ be an $M$-space. The stable homotopy classes of $M$-maps $\{Y,Y\}=\{Y,Y\}_M$ form a group under addition which is both left and right distributive with respect to composition of maps making $\{Y,Y\}$ into a ring. The $M$-action on $Y$ induces a map from $M$ to the (stable) endomorphisms of $Y$ which respect this action. Passing to path components and taking the centre $\mathcal{Z}$ we get a map
\[
\mathcal{Z}\pi_0(M)\rightarrow\{Y,Y\}_M 
\]
which turns $\{Y,Y\}_M$ into an algebra over $\mathbb{Z}[\mathcal{Z}\pi_0(M)]$.
As a consequence we can evaluate polynomials with coefficients in $\ZZ[\mathcal{Z}\pi_0(M)]$ on $\{Y,Y\}_M$. We want to introduce a parameter for the study of (stable) endomorphisms of $M$-spaces. Consider a multiplicative set $S$ consisting of monic polynomials in the polynomial ring $\ZZ[\mathcal{Z}\pi_0(M)][x]$ containing the multiplicative set $\{x^n,n\geq0\}$. 
Define the endomorphism category (with respect to $S$) $\End_?^S(M)=\End_?^S(\ast,M)$ as follows. The objects are pairs $(Y,f)$ with $Y$ an object in $\CC_?(M)$ and $f$ an endomorphism of $Y$ with the property that there exists a polynomial $h\in S$ (stably) homotopy annihilating $f$, i.e. such that $h[f]$ vanishes in the $\ZZ[\mathcal{Z}\pi_0(M)]$-algebra of stable homotopy classes of $M$-self-maps $\{Y,Y\}_M$. Morphisms are given by commutative squares

\[
\begindc{0}[40]
\obj(1,2)[1]{$Y$} 
\obj(2,2)[2]{$Y$}
\obj(1,1)[3]{$Y'$}
\obj(2,1)[4]{$Y'$}
\mor{1}{2}{$f$}
\mor{2}{4}{}[1,0]
\mor{1}{3}{}[1,0]
\mor{3}{4}{$f'$}
\enddc 
\]
%\[
%\begin{tikzcd}%[row sep=huge, column sep=huge]
%Y \arrow[r,"f"] \arrow[d," "'] 
%&Y\arrow[d, " "] \\
%Y'\arrow[r, " "] & Y'
%\end{tikzcd}
%\]

Using the forgetful functor 
\[
\mathrm{p}:\End_?^S(\ast,M)\rightarrow \CC_?(M) 
\]
we can pull back the notions of cofibrations and weak equivalences to the endomorphism category.
\begin{remark}
The condition on $f$ being homotopy annihilated by a polynomial in $S$ requires us to make sense out of addition in $[Y,Y]$. For this reason we pass to homotopy classes of stable maps. It might be easier to work with finite (resp. finitely dominated) spectra straight away. We prefer, however, to stay in the unstable category to make the connection of the present work to \cite{FTI},\cite{FTII},\cite{FTIII} more visible. Since we are concerned with $K$-theoretic considerations and suspension induces an equivalence on $K$-theory there is no difference. 

Note that a polynomial $h\in\ZZ[\mathcal{Z}\pi_0(M)]$ can be evaluated at an endomorphism $f$ (after suspension), however the result $h(f)$ depends on choices and only the class $[h(f)]$ does not. We will nevertheless sometimes use the notation $h(f)$ for any representative.
\end{remark}

\begin{lemma}
 The category $\End_?^S(\ast,M)$ is a Waldhausen category.
 \end{lemma}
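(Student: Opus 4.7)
The plan is to verify Waldhausen's axioms for $\End^S_?(\ast, M)$ by transporting them along the forgetful functor $\mathrm{p}$, which by construction detects both the cofibrations and the weak equivalences. The zero object is $(\ast, \id_\ast)$, whose trivial endomorphism is annihilated by $x \in S$. Isomorphisms in $\End^S_?$ project to isomorphisms of underlying $M$-spaces and hence lie in both distinguished classes; identities lie in both classes trivially; closure under composition is inherited from $\CC_?(M)$; and the gluing lemma follows from the gluing lemma in $\CC_?(M)$ applied to the diagram of underlying $M$-spaces. The only substantive point is to check that the endomorphism condition is closed under the pushout of a cofibration along an arbitrary morphism.

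For this, given $(Y', f') \hookleftarrow (Y, f) \to (Z, g)$ with the left map a cofibration, I form the pushout $W := Y' \cup_Y Z$ in $\CC_?(M)$; the universal property yields an $M$-self-map $w := f' \cup_f g$ on $W$. I then need to exhibit some $H \in S$ with $H([w]) = 0$. Since $S$ is multiplicative, I pick $h_1, h_2, h_3 \in S$ annihilating $[f]$, $[f']$ and $[g]$ respectively, and set $H := h_1 h_2 h_3 \in S$. Polynomial evaluation is compatible with the pushout presentation in the sense that, up to a choice of representative, $H(w)$ is realised as the pushout of $H(f')$ with $H(g)$ over $H(f)$.

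The hard part is that stable triviality of the three evaluated pieces does not immediately imply stable triviality of their glued counterpart: the individual stable null-homotopies of $H(f')$ and $H(g)$ need not agree on $Y$. However, since $H([f]) = 0$ as well, their discrepancy on $Y$ is itself a stably null self-map and may be extended across a collar in $W$ and used to correct one of the two given null-homotopies. After this correction the null-homotopies glue to a stable null-homotopy of $H(w)$, so that $(W, w) \in \End^S_?(\ast, M)$ and the closure is established. The one place I expect to need genuine care is the precise compatibility of polynomial evaluation with pushouts, since, as the preceding remark warns, $H(f)$ is only well-defined on a representative after suspension; I would fix representatives after finitely many suspensions and work throughout on the level of stable homotopy classes so that all arguments are independent of these choices.
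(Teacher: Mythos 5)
Your overall strategy coincides with the paper's: all axioms except one are transported along the forgetful functor to $\CC_?(M)$, and the only substantive point is that the $S$-annihilation condition is closed under cobase change. However, your treatment of that point has a genuine gap. You claim that $H=h_1h_2h_3$ itself stably annihilates $w=f'\cup_f g$, arguing that since $H[f]=0$ the "discrepancy" between the chosen null-homotopies of $H(f')$ and $H(g)$ can be corrected over a collar. That discrepancy is not a self-map of $Y$: gluing the two null-homotopies of $H(w)|_Y$ produces a class in $[\Sigma Y,\,Y'\cup_Y Z]$, and the vanishing of $H[f]$ gives no control over it. You may modify the chosen null-homotopies only by classes coming from $[\Sigma Y', W]$ and $[\Sigma Z, W]$, and the resulting obstruction class need not die. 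In fact the statement you are trying to prove is false in general: linearly, for the extension $\mathbb{Z}/p\to\mathbb{Z}/p^2\to\mathbb{Z}/p$ with the endomorphism ``multiplication by $p$'', the polynomial $h(x)=x$ annihilates the endomorphism on sub and quotient but not on the middle term, where only $h^2$ works; topologically the same phenomenon is visible in $2\cdot\mathrm{id}_{S/2}\neq 0$ for the mod $2$ Moore spectrum. So no choice of null-homotopies will in general exhibit $H[w]=0$.

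The paper's proof avoids this by not claiming $H[w]=0$ at all. From the cofibration sequence $Y'\vee Z\xrightarrow{j}W\xrightarrow{\delta}\Sigma Y$ and $j^*(H[w])=[H(f')\vee H(g)]=0$ one only concludes that $H[w]=\gamma\circ[\delta]$ for some $\gamma\in[\Sigma Y,W]$. Then $H[w]^2=\gamma\circ[\delta]\circ\gamma\circ[\delta]$, and $[\delta]\circ\gamma\circ[\delta]=[\delta]\circ H[w]=[\Sigma H(f)]\circ[\delta]=0$ because $H$ annihilates $f$; hence $H[w]^2=0$, i.e.\ $H^2$ stably annihilates $w$. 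Since $S$ is multiplicatively closed, $H^2\in S$, and the definition of $\End^S_?(\ast,M)$ only asks for the existence of \emph{some} annihilating polynomial in $S$, not for the specific product you chose. With this replacement (factor $H[w]$ through $\Sigma Y$ and square, rather than trying to correct null-homotopies) your argument becomes the paper's proof; as written, the key step does not go through.
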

\begin{proof}
 One can proceed almost verbatim as in the proof of \cite[Lemma 2.1]{FTIII}. The only surprising fact is that endomorphisms with the property of being stably annihilated by $S$ are closed under cobase change. Given a diagram in $\End_?^S(M)$ 
 \[
 (B,f_1)\leftarrow(A,f_0)\coRarrow(C,f_2)
 \]
consider its pushout $(B\cup_AC,f)$ with $f$ defined by $f_1\cup_{f_0}f_2$. For $i=0,1,2$, there exist polynomials $h_i\in S$ stably homotopy annihilating $f_i$. Choose a polynomial $h$ homotopy annihilating $f_i$ for all $i$, e.g. $h=h_0h_1h_2$. To work with unstable homotopy classess and to simplify notation we assume that $A,B,C$ are already $n$-fold suspensions, with $n$ large enough. The cofibration sequence
\[
B\vee C\xrightarrow{j}B\cup_AC\xrightarrow{\delta}\Sigma A 
\]
induces an exact sequence 
\[
[\Sigma A,B\cup_AC]\xrightarrow{\delta^\ast}[B\cup_AC,B\cup_AC]\xrightarrow{j^\ast}[B\vee C,B\cup_AC]
\]
and
\[
j^\ast(h[f])=[h(f)\circ j]=[h(f_1)\vee h(f_2)]=0 
\]
implies that there is a class $\gamma\in[\Sigma A,B\cup_AC]$ such that $h[f]=\delta^\ast(\gamma)=\gamma\circ[\delta]$ and consequently 
\[
h[f]^2=\gamma\circ[\delta]\circ\gamma\circ[\delta].
\]
Since $[\delta]\circ\gamma\circ[\delta]=[\delta]\circ h[f]$ is just $[\Sigma h(f_0)]\circ[\delta]$ and $[\Sigma h(f_0)]$ is trivial we conclude that $h[f]^2$ is zero, i.e. that $h^2$ stably homotopy annihilates $f$.
\end{proof}
\begin{example}\label{Examples}
\quad\\\begin{enumerate}
\item Let $X$ be a based connected space and $G$ the realisation of its Kan loop group. 
Let $S$ be the multiplicative set given by monomials $x^n$. In this case the (finitely dominated) End-category coincides with the Nil-category introduced in \cite{FTIII} 
\[
\End_{fd}^S(\ast,G)=\Nil_{fd}(\ast,G)=\Nil_{fd}(X). 
\]
\item Assume $\pi_0(M)$ is commutative and let $S$ consist of all monic polynomials in $\ZZ[\pi_0M][x]$. In this case the endomorphism category consists of \textit{all} endomorphisms
\[
\End_?^S(\ast,M)=\End_?(\ast,M).
\]
This can be justified as follows. For a finite $M$-space $Y$ we can consider its chain complex of free $\ZZ[\pi_0M]$-modules. The induced chain map $f_\ast$ possesses in every degree $k$ a characteristic polynomial $h_k$ \footnote{Characteristic polynomials can be defined for endomorphisms of projective $R$-modules for any commutative ring $R$ (cf. \cite[p. 631]{Bass68} and \cite{Alm73}).} and the Cayley-Hamilton theorem implies that $h_k(f_k)=0$. Thus a finite product $h$ of the $h_k$'s annihilates $f_\ast$. Since $h(f)$ induces an acyclic map of $\ZZ[\pi_0M]$-module chain complexes it is stably nullhomotopic through an $M$-homotopy. In the homotopy finite case one can pass to the corresponding finite situation. 

Assume now that $Y$ is finitely dominated with $f$ any endomorphism. There is a retraction from a homotopy finite space $\bar{Y}$
\[
Y\xleftrightarrows{\quad r\quad}{\quad i\quad}\bar{Y} 
\]
and we can consider the endomorphism $\bar{f}=i\circ f\circ r :\bar{Y}\rightarrow\bar{Y}$. Because of the above there exists a polynomial $h$ with the property that $h[\bar{f} ]=0$ in $\{\bar{Y},\bar{Y}\}_M$. This however immediatly implies that $h[f]=0$ in $\{Y,Y\}_M$. 
\end{enumerate}
\end{example}
\subsection{}
Let $L$ denote one of the monoids: the non-positive integers $\NN_-$, the non-negative integers $\NN_+$ or all integers $\ZZ$. Write $t^{-1},t$ or $t$ for the respective generators. 

An object $Y\in\mathbb{C}_?(M\times L)$ will be called $T$-contractible if there exists a polynomial $h$ in $T$ such that the telescope
\[
\hocolim (\cdots\xrightarrow{h(t)}Y\xrightarrow{h(t)}Y\xrightarrow{h(t)}\cdots)
\]
respectively 
\[
\hocolim(\cdots\xrightarrow{h(t^{-1})}Y\xrightarrow{h(t)^{-1}}Y\xrightarrow{h(t)^{-1}}\cdots)
\]
is stably contractible in $\TT(M)$. Note that in general the telescope is \textit{not} in \linebreak$\CC_{fd}(M\times L)$, however it is still a cofibrant $M$-space in $\TT(M)$. A map $f:Y\rightarrow Z$ in $\CC_?(M\times L)$ will be called a \textit{$T$-equivalence} if its homotopy fibre
is $T$-contractible. Since this is a stable condition, it is equivalent to the homotopy cofibre $\hocofib(f)$ being $T$-contractible. 
Let $M$ and $S$ be as before. The \textit{reverse polynomial} $\tilde{h}$  of a polynomial $h\in S$ of degree $n$ is given by $\tilde{h}(x)=h(x^{-1})\cdot x^n$. We will denote the set of all reverse polynomials of polynomials in $S$ by $\tilde{S}$. Write $\mathcal{R}_?^S(M)$ for the following category. The underlying category with cofibrations is given by $\CC_?(M\times\NN_+)$ and the weak equivalences are given by $\tilde{S}$-equivalences. 

\begin{lemma}
 This is a Waldhausen category.
\end{lemma}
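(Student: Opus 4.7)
The underlying category with cofibrations is $\CC_?(M\times\NN_+)$, which is already a Waldhausen category, so the plan is just to verify the two axioms for the class of $\tilde{S}$-equivalences: they contain all isomorphisms, and they satisfy Waldhausen's gluing lemma. The first axiom is immediate, since an isomorphism has a contractible homotopy cofibre, and $\hocolim$ on a constant $\ast$-diagram is $\ast$, hence any $\tilde{h}\in\tilde{S}$ witnesses $\tilde{S}$-contractibility.

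For the gluing axiom I consider a map of pushout diagrams with vertical $\tilde{S}$-equivalences. The homotopy cofibre of the induced map $B\cup_A C\to B'\cup_{A'}C'$ can be identified with the homotopy pushout of the vertical cofibres $\hocofib(B\to B')\leftarrow\hocofib(A\to A')\to\hocofib(C\to C')$, by a standard interchange-of-pushouts argument. Thus the gluing axiom reduces to showing that the class of $\tilde{S}$-contractible objects is closed under homotopy pushouts.

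For this closure property I use three observations. First, $\tilde{S}$ is multiplicatively closed, since reversal satisfies $\widetilde{gh}(x)=(gh)(x^{-1})x^{\deg g+\deg h}=\tilde{g}(x)\tilde{h}(x)$. Second, if $Y$ is $\tilde{h}$-contractible, then it is $\tilde{h}\tilde{g}$-contractible for any further $\tilde{g}\in\tilde{S}$, since the $\tilde{h}\tilde{g}$-telescope factors cofinally through the $\tilde{h}$-telescope. Third, $\hocolim$ commutes with finite homotopy colimits, in particular with homotopy pushouts. So given cofibres $F_A,F_B,F_C$ with witnesses $\tilde{h}_A,\tilde{h}_B,\tilde{h}_C\in\tilde{S}$, I take the common witness $\tilde{h}=\tilde{h}_A\tilde{h}_B\tilde{h}_C$; then the $\tilde{h}$-telescope of $F_B\cup_{F_A}F_C$ is the homotopy pushout of three stably contractible spaces, which is stably contractible.

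The main obstacle is the careful identification of the total cofibre of a map of pushouts as a homotopy pushout of the individual cofibres, together with the commutation of the telescope with this pushout, all done compatibly with the $(M\times\NN_+)$-equivariant structure. Both facts are routine once one fixes mapping-cylinder models and is careful that the $\NN_+$-action on the telescope respects the pushout, but they do require some bookkeeping to make the structure maps strictly compatible.
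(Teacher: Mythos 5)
The paper states this lemma without proof, so there is nothing to compare against; judged on its own, your argument is essentially sound and supplies the right reduction: the only non-formal point is that the class of $\tilde{S}$-contractible objects is closed under the relevant homotopy colimits, and your three observations (multiplicativity of $S\mapsto\tilde S$ for monic polynomials, stability of the witness under multiplication by a further element of $\tilde S$, and commutation of the telescope with finite homotopy colimits) do establish closure under homotopy pushouts, which yields the gluing lemma via the standard identification of the cofibre of a map of pushouts with the pushout of the cofibres.

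Two caveats. First, you have slightly misstated Waldhausen's first axiom: the weak equivalences must form a \emph{subcategory} containing the isomorphisms, so you also need closure of $\tilde S$-equivalences under composition. This is not automatic from what you wrote, but it follows by the same method: for $X\xrightarrow{f}Y\xrightarrow{g}Z$ the cofibres fit into a cofibration sequence $\hocofib(f)\to\hocofib(gf)\to\hocofib(g)$, and with the common witness $\tilde h=\tilde h_f\tilde h_g$ the $\tilde h$-telescope of the middle term is an extension of two stably contractible spaces, hence stably contractible; you should add this line. Second, your justification that $\tilde h$-contractibility implies $\tilde h\tilde g$-contractibility (``factors cofinally'') is loose as stated; the clean argument is on stable homotopy groups: contractibility of the telescope means every element of $\pi_*^{s}(Y)$ is killed by a power of $\tilde h(t)_*$, and since the coefficients lie in $\mathcal{Z}\pi_0(M)$ the operators $\tilde h(t)_*$ and $\tilde g(t)_*$ commute, so $(\tilde h\tilde g)(t)_*^{\,n}=\tilde g(t)_*^{\,n}\tilde h(t)_*^{\,n}$ kills the same elements. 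With these two repairs the proof is complete.
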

There is an exact projection functor
\[
\mathcal{R}_?^S(M)\rightarrow A^?(\ast,M) 
\]
mapping an object $X$ to the quotient $X/\NN_+$ under the action by $t$. This induces a map on $K$-theory and we denote by $E^S_?(M)=E^S_?A(\ast,M)$ the corresponding homotopy fibre. The homotopy fibre of

\[
K(\End^S_?(\ast,M))\rightarrow A^?(\ast,M) 
\]
will be denoted by $\tilde{K}(\End^S_?(\ast,M))$. We aim for the following theorem

\begin{theorem}\label{Main Theorem}
 For $?=f,hf,fd\,$ there is a natural homotopy equivalence of spectra
 \[
\tilde{K}(\End^S_{?}(\ast,M)) \simeq\Omega E_{?}^SA(\ast,M).
 \]
\end{theorem}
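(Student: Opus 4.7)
The plan is to mimic Grayson's linear-case argument, with Waldhausen's fibration theorem replacing his localisation sequence. I would equip the Waldhausen category $\CC_?(M\times\NN_+)$ with two compatible classes of weak equivalences---the ordinary $v$-equivalences and the coarser class $w$ of $\tilde{S}$-equivalences---whose cylinder and saturation axioms can be checked essentially as in \cite[Lemma 2.1]{FTIII}. Waldhausen's fibration theorem then produces the homotopy fibration
\[
K\bigl(\CC^w_?(M\times\NN_+),v\bigr) \to A^?(\ast,M\times\NN_+) \to K(\mathcal{R}^S_?(M)),
\]
where $\CC^w_?(M\times\NN_+)$ denotes the full subcategory of $\tilde{S}$-acyclic cofibrant $M\times\NN_+$-spaces.

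The crucial step is the identification of $K\bigl(\CC^w_?(M\times\NN_+), v\bigr)$ with $K(\End^S_?(\ast,M))$. To $(Y,f)\in\End^S_?$ I would associate the mapping cone of $t-f$ acting on the free $M\times\NN_+$-space generated by $Y$; the stable annihilation of $[f]$ by some $h\in S$ translates, via the reverse polynomial $\tilde{h}$, into $\tilde{S}$-acyclicity of this cone. The resulting exact functor $\End^S_?(\ast,M)\to\CC^w_?(M\times\NN_+)$ is then upgraded to a $K$-theory equivalence via Waldhausen's approximation theorem. Combining this identification with a Bass-Heller-Swan-style retract---both $A^?(\ast,M\times\NN_+)$ and $K(\mathcal{R}^S_?(M))$ split off a common $A^?(\ast,M)$-summand via the adjunction between $Y$ and the free $M\times\NN_+$-space on $Y$, with retraction $X\mapsto X/\NN_+$---and chasing the resulting diagram compatibly with the projections to $A^?(\ast,M)$ yields, after a loop-space rotation, the equivalence $\tilde{K}(\End^S_?(\ast,M))\simeq\Omega E^S_?(M)$.

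The main obstacle is verifying the approximation step. In the linear setting Grayson resolves the analogous problem via Smith-normal-form manipulations for $R[t]$-modules; in the non-linear setting one must build the requisite lifts homotopically, cell by cell over the $\NN_+$-free cell structure, and similarly lift morphisms up to $\tilde{S}$-equivalence. A secondary difficulty is controlling the finiteness under the cone construction: in the $fd$ case an intermediate passage through $\CC_{sfd}(M\times\NN_+)$ may be required, in the spirit of the techniques developed in \cite{FTI}, and the three cases $?=f,hf,fd$ must be tracked separately throughout the comparison.
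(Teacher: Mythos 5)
Your overall architecture follows Grayson in spirit, but the decisive step --- identifying the fibre of the affine-line localization, i.e.\ $K$ of the subcategory of $\tilde{S}$-acyclic objects in $\CC_?(M\times\NN_+)$, with $K(\End^S_?(\ast,M))$ --- is not correct, and the mechanism you give for it fails. On the homotopy coequalizer of the shift and $\id\wedge f$ on $(\NN_+)_+\wedge Y$ (your ``cone of $t-f$''), the generator $t$ acts up to stable homotopy as $f$, so a polynomial $p(t)$ acts as $p[f]$. The hypothesis $h[f]=0$ with $h\in S$ therefore makes this cone $S$-contractible, \emph{not} $\tilde{S}$-contractible: there is no reason for $\tilde{h}[f]=\id+a_{n-1}[f]+\dots+a_0[f^n]$ to have a stably contractible telescope. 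The nil example of \ref{Examples}(1) makes this fatal: for $S=\{x^n\}$ every reverse polynomial equals $1$, so $\tilde{S}$-acyclic means stably contractible, the localization map $A^?(\ast,M\times\NN_+)\rightarrow K(\mathcal{R}^S_?(M))$ is an equivalence and its fibre is contractible, while $K(\Nil_{fd}(X))$ is certainly not (it retracts onto $A^{fd}(X)$, and its reduced part is $\Omega N_+A^{fd}(X)$ by \ref{Nil-case}). In other words, over the affine line the interesting part of $\End^S$ sits in the \emph{middle} term of your fibration (the nil part of $A^?(\ast,M\times\NN_+)$), not in the fibre; this is exactly why the theorem carries a loop/degree shift, and no Bass--Heller--Swan-style retraction and diagram chase at the end can recover it from your fibration. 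Replacing $t-f$ by the analogue of $\id-tf$ does produce $\tilde{S}$-acyclic objects, but that functor annihilates all nilpotent endomorphisms, so it cannot induce an equivalence onto the fibre either.

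The torsion-versus-endomorphism identification you want is only true ``at infinity'', i.e.\ over the projective line, and that is how the paper (following Grayson) proceeds: Waldhausen's fibration theorem is applied to $\Proj_{fd}(M)$ with the coarser equivalences in which $f_+$ is an $\tilde{S}$-equivalence; the quotient term is identified with $K(\mathcal{R}^S_{fd}(M))$ by an approximation argument through $\DD_{fd}(M\times\NN_+)$ (Lemma \ref{Lemma: localisation of D(MxN) is R^S}, Proposition \ref{End-sequence}), and the acyclic fibre $K(\Proj^{h^{\tilde{S}}_{\NN_+}}(M))$ is identified with $K(\End^S_{fd}(M))$ (Lemma \ref{Lemma:Third term is End}). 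There the characteristic sequence is formed over $\NN_-$, so $Y_f$ is $S$-contractible, and only after passing to the telescope $Y_f(t)$, where the action is invertible and $g(t^{-1})=\tilde{g}(t)\cdot t^{-n}$, does the $\tilde{S}$-condition appear on the $\NN_+$-component. The theorem then follows by combining this with the projective-line decomposition of Proposition \ref{Proposition: Decomposition of K-theory of the projective line} and the splittings $\mathrm{j}$ and $\varepsilon$. So your proposal, as written, is missing the essential device (the projective line, or an equivalent way of encoding the behaviour at infinity) and its key lemma is false; the approximation and finiteness issues you flag are real but secondary to this.
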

\begin{remark} 
We will prove the theorem for ?=fd which is sufficient. In fact, for all values of ? the spectra $\tilde{K}(\End^S_{?}(\ast,M))$ resp. $E_{?}^SA(\ast,M)$ are equivalent by a cofinality argument.
\end{remark}
We want to compare this theorem to two results which exist in the literature. Denote by $A$ the group ring $\ZZ[\pi_0M]$ and let $S$ be as before. Write $\mathbf{End_A^S}$ for the exact category of pairs $(P,f)$ with $P$ a f.g. projective $A$-module and $f$ an endomorphism of $P$ such that there exists a $g$ in $S$ with the property $g(f)=0$. Let $\End_i^SA$ be the kernel of the map on $K$-groups induced by projection
\[
K_i\mathbf{End_A^S}\rightarrow K_iA.
\]
Similarly let $EK_i^S$ be the cokernel of the canonical map
\[
K_iA\rightarrow K_i(\tilde{S}^{-1}A[x]).  
\]
In \cite{Gra77} Grayson obtains
\begin{theorem}\label{Grayson}
There is a natural isomorphism of groups
\[
\End^S_{i-1}A\cong EK_i^SA. 
\]
\end{theorem}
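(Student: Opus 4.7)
The plan is to follow Grayson's original strategy, adapted to Waldhausen categories of spaces, using a ``non-linear projective line'' construction to manufacture the loop shift. The first step is to produce a natural exact functor
\[
\Phi\colon\End^S_{fd}(\ast,M)\longrightarrow\mathcal{R}^S_{fd}(M)
\]
sending $(Y,f)$ to the mapping telescope of $Y\xrightarrow{f}Y\xrightarrow{f}\cdots$, regarded as an $M\times\NN_+$-space with $t$ acting by the shift. The hypothesis that $h[f]=0$ for some $h\in S$ should translate, after considering $\tilde h(t)$ acting on the telescope, into $\tilde S$-triviality of the natural map from the constant object $Y\times\NN_+$ into the telescope. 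Crucially, post-composing $\Phi$ with the projection $X\mapsto X/\NN_+$ to $A^?(\ast,M)$ recovers $Y$ up to canonical equivalence, so $\Phi$ is compatible with the two ``reduction to $A$'' maps whose homotopy fibres are $\tilde K(\End^S_{fd}(\ast,M))$ and $E^S_{fd}A(\ast,M)$.

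The heart of the argument is a Waldhausen-style analogue of Grayson's non-linear projective line. I would introduce Waldhausen categories of $\CC_{fd}(M\times L)$-objects for $L\in\{\NN_+,\NN_-,\ZZ\}$ with weak equivalences given by $\tilde S$-, $S$-, and $(S\cup\tilde S)$-equivalences respectively, organised into a square of restriction-of-action functors glued over the $\ZZ$-version. Following Grayson, one expects $K$ of the nonlinear $\mathbb{P}^1$ to split as $A^?\vee A^?$ via the two evident inclusions, and a Mayer--Vietoris (homotopy pullback) argument then produces the required loop shift. In parallel, the homotopy fibre of $K(\Phi)$ must be identified via Waldhausen's fibration theorem applied to the pair (ordinary weak equivalences, $\tilde S$-equivalences) inside $\mathcal{R}^S_{fd}(M)$: the subcategory of $\tilde S$-acyclic objects should, up to $K$-theory, be the essential image of $\Phi$. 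Chasing the resulting fibre sequence through the $\mathbb{P}^1$-square then yields the asserted equivalence $\tilde K(\End^S_{fd}(\ast,M))\simeq\Omega E^S_{fd}A(\ast,M)$.

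The principal obstacle is this fibration-theorem step. In the algebraic context, Quillen's localisation theorem supplies the fibre sequence almost for free, whereas Waldhausen's version demands a cylinder functor, the saturation axiom, and a careful identification of ``locally trivial'' objects. Verifying these for homotopy-annihilated endomorphisms is delicate because, as noted in the remark after the definition of $\End^S_?(\ast,M)$, the polynomial expression $h(f)$ is only well-defined up to stable homotopy, and $\tilde S$-acyclicity is a telescope condition rather than an honest vanishing. The closure-under-pushouts lemma proved above is a prototype of the kind of argument that must be extended to cylinders, homotopy cofibres, and telescopes to make the full fibration theorem go through. A secondary, more routine obstacle is the cofinality argument indicated in the remark that reduces the cases $?=f,hf$ to $?=fd$.
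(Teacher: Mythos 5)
The statement at issue is the algebraic one, $\End^S_{i-1}A\cong EK_i^SA$ with $A=\ZZ[\pi_0M]$; the paper quotes it from Grayson and observes that it can now also be seen as a corollary of Theorem \ref{Main Theorem} by applying the linearisation functor to both sides of the space-level equivalence. Your proposal never actually proves this statement: what you sketch is Theorem \ref{Main Theorem} itself, and your final sentence asserts $\tilde{K}(\End^S_{fd}(\ast,M))\simeq\Omega E^S_{fd}A(\ast,M)$, which is not the claim to be proved. The entire bridge from $A$-theory to the $K$-theory of the ring $A$ is missing: one needs the linearisation functor from the Waldhausen categories of $M$- resp.\ $(M\times\NN_+)$-spaces to chain complexes of $A$- resp.\ $A[x]$-modules, the identification of the linearisation of $\End^S_{fd}(\ast,M)$ with the exact category $\mathbf{End_A^S}$ and of $\mathcal{R}^S_{fd}(M)$ with Grayson's localisation $\tilde{S}^{-1}A[x]$ (via the corresponding categories of chain complexes), and finally the passage to homotopy groups which, using the splittings of the projections, converts the two fibre terms into the kernel of $K_{i-1}\mathbf{End_A^S}\rightarrow K_{i-1}A$ and the cokernel of $K_iA\rightarrow K_i(\tilde{S}^{-1}A[x])$. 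None of this appears; alternatively Grayson's original localisation/resolution argument for exact categories would prove the statement directly, but you do not give that either. So, as a proof of Theorem \ref{Grayson}, the attempt has a genuine gap.

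Even read as a sketch of Theorem \ref{Main Theorem}, some steps would fail as stated. The claim that the projection $X\mapsto X/\NN_+$ applied to your telescope functor ``recovers $Y$ up to canonical equivalence'' is incorrect: the $\NN_+$-orbit space of the mapping telescope of $f$ with the shift action is the mapping torus of $f$, not $Y$. The paper never needs such a direct compatibility, because it routes everything through the projective line: the splitting $K(\CC_{fd}(M))\xrightarrow{\psi_{-1}}K(\Proj_{fd}(M))\xrightarrow{\Gamma}K(\CC_{fd}(M))$ of Proposition \ref{Proposition: Decomposition of K-theory of the projective line}, the fibration of Proposition \ref{End-sequence} obtained from Waldhausen's fibration theorem for the coarser $h_{\NN_+}^{\tilde{S}}$-equivalences together with Cisinski's approximation theorem, and the identification of the locally trivial part with $\End^S_{fd}(M)$ in Lemmas \ref{a_+ is equivalence} and \ref{Lemma:Third term is End}, where showing that $(Y_-,t^{-1})$ is stably finitely dominated requires the global-sections and homotopy-pullback argument. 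Your alternative plan, applying the fibration theorem inside $\mathcal{R}^S_{fd}(M)$ and identifying the $\tilde{S}$-acyclic objects with the image of $\Phi$, is closer to Grayson's original dévissage; you correctly flag it as the delicate point but do not carry it out, so even the space-level half of the argument is not complete.
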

This can now be seen as a corollary to Theorem \ref{Main Theorem} by applying the linearisation functor on both sides of the homotopy equivalence. 

Turn now to the example \ref{Examples} (1). In this case $\mathcal{R}^S_{fd}(X)=\mathcal{R}^S_{fd}(\ast,G)$ can be naturally identified with the nil-term $N_+A^{fd}(X)$ (cf. \ref{Remark:Nil-term}) introduced in \cite{FTI}. In \cite{FTIII} Klein and Williams prove
\begin{theorem}\label{Nil-case}
There is a natural homotopy equivalence of spectra
\[
 \tilde{K}(\Nil_{fd}(X)\simeq \Omega N_+A^{fd}(X).
\]
\end{theorem}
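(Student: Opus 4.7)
The plan is to derive this theorem as a direct specialisation of Theorem \ref{Main Theorem}: set $M = G$, the Kan loop group of $X$, and take $S = \{x^n : n \geq 0\}$, the minimal allowed multiplicative set consisting of monomials. The two sides then have to be identified separately.

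For the left-hand side, Example \ref{Examples} (1) already gives
\[
\End^S_{fd}(\ast, G) = \Nil_{fd}(\ast, G) = \Nil_{fd}(X),
\]
since a monomial $x^n$ stably homotopy annihilates $f$ iff some power of $f$ is stably nullhomotopic, which is precisely the defining stable nilpotence condition. Hence $\tilde{K}(\End^S_{fd}(\ast, G)) = \tilde{K}(\Nil_{fd}(X))$.

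For the right-hand side, the key elementary observation is that the reverse polynomial of a monomial is a constant: $\widetilde{x^n}(x) = x^{-n} \cdot x^n = 1$, so $\tilde{S} = \{1\}$. A $\tilde{S}$-equivalence is therefore a map whose homotopy fibre $F$ has the degenerate telescope $\hocolim(F \xrightarrow{1} F \xrightarrow{1} \cdots) \simeq F$ stably contractible, which is just an ordinary stable weak equivalence. The Waldhausen category $\mathcal{R}^{\{x^n\}}_{fd}(G)$ therefore collapses to $\CC_{fd}(G \times \NN_+)$ equipped with ordinary weak equivalences — the Waldhausen category of finitely dominated $\NN_+$-parametrised $G$-spaces. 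Together with the projection $Y \mapsto Y/\NN_+$ (which on a free cell $D^k \times \NN_+$ returns $D^k$, matching the augmentation $t \mapsto 0$ used in \cite{FTI}), this is precisely the Waldhausen setup employed in \cite{FTI} to define the nil-term; cf.\ the forthcoming Remark \ref{Remark:Nil-term}. Passing to homotopy fibres gives $E^{\{x^n\}}_{fd}A(\ast, G) \simeq N_+A^{fd}(X)$.

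Plugging both identifications into Theorem \ref{Main Theorem} yields the claimed equivalence. The main obstacle is the reconciliation of the Waldhausen structure on $\mathcal{R}^{\{x^n\}}_{fd}(G)$ with the model used for $N_+A^{fd}(X)$ in \cite{FTI}: one must match finiteness conditions, cofibrations, and the augmentation on the nose, not merely up to some weak equivalence of functors. This is the content of Remark \ref{Remark:Nil-term} and amounts to a careful bookkeeping exercise once the dictionary between $\NN_+$-parametrised $G$-spaces and the retractive objects used in \cite{FTI} is fixed; the essential $K$-theoretic work has already been carried out in proving Theorem \ref{Main Theorem}.
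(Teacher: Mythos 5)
Your route is genuinely different from the paper's treatment: the paper does not prove Theorem \ref{Nil-case} at all, it quotes it as the main result of Klein and Williams in \cite{FTIII}, and its own Theorem \ref{Main Theorem} is then \emph{proved by following their argument} (projective line, characteristic sequence, etc.), the stated relation being ``\ref{Main Theorem} generalises \ref{Nil-case}'' rather than ``\ref{Nil-case} follows from \ref{Main Theorem}''. Your proposal reverses this and recovers \ref{Nil-case} as the special case $M=G$, $S=\{x^n\}$. That is legitimate and non-circular: the proof of \ref{Main Theorem} uses auxiliary results of \cite{FTIII} (the characteristic sequence, \cite[Lemma 2.1]{FTIII}, \cite[4.1]{FTIII}) but not Theorem \ref{Nil-case} itself, and the paper explicitly envisages both of your identifications (Example \ref{Examples}(1) for the left-hand side, the remark after \ref{Nil-case} together with Remark \ref{Remark:Nil-term} for the right-hand side) — this is exactly parallel to how the paper treats Grayson's Theorem \ref{Grayson} as a corollary via linearisation. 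What your approach buys is a uniform deduction of the Klein--Williams theorem from the more general statement; what it costs is that it presupposes the full strength of \ref{Main Theorem}, whose proof is essentially a rerun of the Klein--Williams argument, so no independent proof is obtained.

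One step in your right-hand-side identification needs more than ``bookkeeping''. You correctly compute $\tilde S=\{1\}$, but a $\{1\}$-equivalence is a \emph{stable} weak equivalence of underlying spaces, not an ordinary one, so $\mathcal{R}^{\{x^n\}}_{fd}(G)$ is $\CC_{fd}(G\times\NN_+)$ with stable equivalences; your phrase ``collapses to $\CC_{fd}(G\times\NN_+)$ equipped with ordinary weak equivalences'' elides this. To match the model of $N_+A^{fd}(X)$ used in \cite{FTI} (the fibre of $K(\mathbb{D}_{fd}(G\times\NN_+))\rightarrow K(\CC_{fd}(G))$ of Remark \ref{Remark:Nil-term}) you must invoke (i) the standard but genuine theorem that stable and unstable equivalences give the same $K$-theory here, because suspension induces an equivalence on $K$-theory, and (ii) the comparison $K(\mathbb{D}_{fd}(G\times\NN_+))\simeq K(\CC_{fd}(G\times\NN_+))$ of \cite[4.5]{FTI} (cf. Lemma \ref{Lemma: localisation of D(MxN) is R^S}), compatibly with the augmentations to $K(\CC_{fd}(G))$. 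On that last point, note that the paper's projection is the $\NN_+$-orbit functor, which it identifies with the augmentation $\varepsilon$ of \cite[7.1]{FTI}; your parenthetical gloss that this is ``$t\mapsto 0$'' is only evident on free cells and should be routed through that identification rather than asserted outright. With these points made explicit, your derivation goes through.
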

Theorem \ref{Main Theorem} should be seen as a generalisation of the latter. 
\begin{remark}
The category $\mathcal{R}^S_{?}(X)$ might appear as an ad-hoc and unmotivated defi-nition. However, there are two obvious reasons, which in our view provide enough motivation. On the one hand, the $K$-theory of $\mathcal{R}^S_{?}(X)$ linearises to the $K$-theory of Grayson's localisation $\tilde{S}^{-1}\ZZ[\pi_1 X][t]$ (via the $K$-theory of the corresponding category of chain complexes) and since our proof is analogous to Grayspn's original proof, the equivalence of  \ref{Main Theorem} linearises to that of \ref{Grayson}. On the other hand  $\mathcal{R}^S_{?}(X)$ is defined along the same lines as $N_+A^{?}(X)$ and looks like the right generalisation of it. (See also Lemma \ref{Lemma: localisation of D(MxN) is R^S}).

The question, however, remains in what sense  $\mathcal{R}^S_{?}(X)$ \textit{is} a localisation and we respond to the referee's request of placing this into the general context of localisation. If we worked in the framework of higher categories in the sense of Lurie for example, the category $\CC_{?}(M)$ would correspond to the $E_1$-ring spectrum  $\spectrum{S}[M]$, meaning that they give the same (possibly apart from $K_0$) $K$-theory - essentially by the approximation theorem. In this analogy our category  $\CC_{?}(M\times\NN_+)$ would correspond to the $E_1$-ring spectrum $\spectrum{S}[M][t]=\spectrum{S}[M]\wedge \NN_+$ (where we simplify notation by writing $\NN_+$ instead of $(\NN_+)_+$ for the non-negative integers with a disjoint base point). In the above, we are given a mutlplicative subset 
\[
\tilde{S}\subset\ZZ[\mathcal{Z}\pi_0(M)][t]\subset\ZZ[\pi_0(M)][t]\subset\pi_0(\spectrum{S}[M]\wedge\NN_+),
\]
consisting in particular of homogeneous elements in the centre of $\pi_*\big(\spectrum{S}[M]\wedge\NN_+\big)$. By \cite[7.2.4]{Lu15} there exists a ``localisation'' $(\spectrum{S}[M]\wedge\NN_+)[\tilde{S}^{-1}]$ and its $K$-theory is equivalent to $K(\mathcal{R}^S_?(X))$. The latter can be shown by comparing the localisation sequence in $K$-theory (cf. \cite[11.15]{Bar15} or \cite[2.8]{ABG15}) with the one arising in our context: the homotopy fibre of $K(\CC_{?}(M\times\NN_+)\rightarrow K(\mathcal{R}_?^S(X))$ is equivalent (potentially up to $K_0$) to the 
$K$-theory of $\tilde{S}$-nilpotent modules over $\spectrum{S}[M]\wedge\NN_+$ - this can be seen by using the approximation theorem again.

In other frameworks of (structured) ring spectra, e.g. of $S$-algebras in the sense of \cite{EKMM97}, things seem to be less clear since $M$ and thus $\spectrum{S}[M]\wedge \NN_+$ will rarely be commutative. There is no guarantee that something like $(\spectrum{S}[M]\wedge\NN_+)[\tilde{S}^{-1}]$ exists. However we can make sure that the corresponding module category exists by employing derived Cohn localisation (cf. \cite{Dw06}). In this situation it is better interpreted as left Bousfield localisation of the ambient model category (cf. \cite{Hi03}). 

Abbreviate by $\spectrum{E}$ the $S$-algebra $\spectrum{S}[M]\wedge\NN_+$ and consider the subset $\tilde{\mathcal{S}}\subset\Map_\spectrum{S}(\spectrum{S},\spectrum{E})$ of maps projecting to $\tilde{S}$. This gives rise to a set $\mathcal{C}_{\tilde{\mathcal{S}}}$ of self maps of $\spectrum{E}$.  Let $L_{\mathcal{C}_{\tilde{\mathcal{S}}}}\mathcal{M}od(E)$ be the left Bousfiled localisation of the category $\mathcal{M}od(E)$ of $\spectrum{E}$-modules with respect to $\mathcal{C}_{\tilde{\mathcal{S}}}$. The subcategory of homotopy finite (resp. homotopy finitely dominated) cell objects in $L_{\mathcal{C}_{\tilde{\mathcal{S}}}}\mathcal{M}od(E)$ is a Waldhausen category and its $K$-theory is equivalent to $K(\mathcal{R}^S_?(X))$. This can be shown by exhibiting $\mathcal{R}^S_?(X)$ as a Waldhausen subcategory of homotopy finite (resp. homotopy finitely dominated) cell objects of the left Bousfiled localisation of $\TT(M\times \NN_+)$ with respect to a suitable class of maps lifting $\tilde{S}$.

\end{remark}
\section{The projective line}
\subsection{}
We are going to employ the projective line category which was used throughout the series \cite{FTI},\cite{FTII},\cite{FTIII}. In fact since we are going to follow very closely the original proof of \ref{Nil-case}, we present the essential definitions.

Given an object $Y$ in $\CC_{fd}(M\times \NN_+)$ resp. in $\CC_{fd}(M\times \NN_-)$ we can associate to it an obejct $Y(t^{-1})$ resp. $Y(t)$ in $\CC_{fd}(M\times\ZZ)$ which is given by the mapping telescope construction, i.e. as the colimit of
\[
\cdots\xrightarrow{t^{\pm 1}}Y\xrightarrow{t^{\pm 1}}Y\xrightarrow{t^{\pm 1}}\cdots.
\]
Define the category $\DD_{fd}(M\times \ZZ)$ with objects given by diagrams
\[
\overline{Y}=(Y_-\xrightarrow{\quad a_-\quad } Y\xleftarrow{\quad a_+\quad} Y_+) 
\]
with $Y_\pm\in\CC_{fd}(M\times\NN_\pm)$, $Y\in\CC_{fd}(M\times\ZZ)$ and $a_-,a_+$ cofibrations. A morphism $\overline{Y}\rightarrow\overline{Z}$ in $\DD_{fd}(M\times\ZZ)$ consists of componentwise morphisms making the obvious diagram commute. It will be called a \textit{cofibration} if all components as well as the induced maps
\[
Y\cup_{Y_-(t)}Z_-(t)\rightarrow Z \qquad\textnormal{and}\qquad Y\cup_{Y_+(t^{-1})}Z_+(t^{-1})\rightarrow Z
\]
are cofibrations. Define $\DD_{fd}(M\times \NN_\mp)$ to be the full subcategory of $\DD_{fd}(M\times \ZZ)$ consisting of objects satisfying the property that
\[
a_\mp(t^{\pm 1}):Y_{\mp}(t^{\pm 1})\longrightarrow Y 
\]
is a weak equivalence. The \textit{projective line} category $\Proj_{fd}(M)$ is defined as the full subcategory of $\DD_{fd}(M\times \ZZ)$ consisting of objects satisfying that \textit{both} maps
\[
a_-(t):Y_-(t)\longrightarrow Y \qquad\textnormal{and}\qquad a_+(t^{-1}):Y_+(t^{-1})\longrightarrow Y
\]
are weak equivalences.
\begin{remark}\label{Remark:Nil-term}
 The $K$-theory of the category $\mathbb{D}_{fd}(G\times\NN_+)$ can be seen as $K$-theory of the (polynomial) brave new ring $\Sigma^\infty G_+[t]$. Thus the homotopy fibre of
 \[
 K(\mathbb{D}_{fd}(G\times\NN_+))\rightarrow K(\mathbb{C}_{fd}(G)) 
 \]
is a natural candidate for the $A$-theoretic nil-term $N_+A^{fd}(X)$ with $G=|\Omega_\bullet X|$.
\end{remark}

\subsection{Auxiliary results}
The following is the non-linear version of Quillen's decomposition of the $K$-theory of the projective line.
\begin{proposition}\label{Proposition: Decomposition of K-theory of the projective line}[\cite[6.7]{FTI}]
There is a homotopy fibration sequence
\[
K(\CC_{fd}(M))\xrightarrow{\psi_{-1}} K(\Proj_{fd}(M))\xrightarrow{\Gamma} K(\CC_{fd}(M)).
\]
\end{proposition}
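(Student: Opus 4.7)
The plan is to mirror Quillen's classical splitting of $K(\mathbb{P}^1_R)$ into two copies of $K(R)$, adapted to the Waldhausen category of $M$-space bundles over the non-linear projective line. Since $\Gamma$ is a naturally defined restriction-type functor (a ``global sections at infinity'' analogue), the first order of business is to produce an explicit exact section of $\Gamma$, namely the functor
\[
\sigma : \CC_{fd}(M) \longrightarrow \Proj_{fd}(M), \qquad Y \longmapsto (Y\wedge\NN_- \to Y\wedge\ZZ \leftarrow Y\wedge\NN_+),
\]
with the tautological structure maps. The relation $\Gamma\circ\sigma\simeq\id$ on $K$-theory is immediate from the definitions, so the task reduces to identifying the homotopy fibre of $\Gamma$ with the image of $\psi_{-1}$.

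Next I would invoke Waldhausen's additivity theorem applied to a canonical short exact sequence of exact endofunctors on $\Proj_{fd}(M)$ of the shape
\[
\psi_{-1}\!\circ\! A \ \coRarrow\ \id_{\Proj_{fd}(M)} \ \quotRarrow\ \sigma\!\circ\!\Gamma,
\]
where $A: \Proj_{fd}(M)\to \CC_{fd}(M)$ is a ``twist extraction'' functor built out of mapping telescopes (the non-linear analogue of the kernel of the evaluation $\mathcal{O}(-1)\otimes \overline{Y}\to \overline{Y}$). Additivity then yields
\[
K(\Proj_{fd}(M)) \ \simeq\ K(\CC_{fd}(M))\oplus K(\CC_{fd}(M)),
\]
with the two summands detected respectively by $\Gamma$ and (after checking $\Gamma\circ\psi_{-1}\simeq\ast$) by a composite with $A$. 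Feeding $\sigma$ back in upgrades this into the required fibre sequence
$K(\CC_{fd}(M))\to K(\Proj_{fd}(M))\to K(\CC_{fd}(M))$ with maps $\psi_{-1}$ and $\Gamma$.

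The main obstacle is the construction of the natural two-step filtration of an arbitrary $\overline{Y}=(Y_-\to Y\leftarrow Y_+)$ realising the sequence $\psi_{-1}A(\overline{Y})\coRarrow\overline{Y}\quotRarrow\sigma\Gamma(\overline{Y})$ by genuine cofibrations in each of the three components, including the mixed cofibrancy conditions $Y\cup_{Y_\pm(t^{\mp 1})}Z_\pm(t^{\mp 1})\to Z$ that are part of the definition of a cofibration in $\DD_{fd}(M\times\ZZ)$. In the linear setting this is essentially a restatement of the sheaf-theoretic short exact sequence $0\to\mathcal{O}(-1)\otimes V\to \mathcal{O}\otimes W\to F\to 0$; here one has to produce the analogue by judicious mapping-telescope manipulations and cofibrant replacements, and then verify that the $\tilde{S}$-equivalences and cell structures are preserved.

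Once this filtration is in place the rest is formal: additivity gives the splitting, the section $\sigma$ identifies one summand with $K(\CC_{fd}(M))$ via $\Gamma$, and examination of the filtration at the level of individual objects shows the complementary summand is precisely the image of $\psi_{-1}$, delivering the stated homotopy fibration sequence.
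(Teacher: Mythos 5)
You should first be aware that the paper offers no proof of this statement at all: the bracketed citation in the proposition is the proof, namely \cite[6.7]{FTI}, where the decomposition of the $K$-theory of the non-linear projective line is established. So the comparison here is really with the argument of that reference, which is Quillen-flavoured but hinges on the cell structure of objects of $\Proj_{fd}(M)$, the additivity theorem, and the cofibration sequences relating consecutive twists $\psi_{n-1},\psi_n,\psi_{n+1}$ (the analogue of $0\to\mathcal{O}(n-1)\to\mathcal{O}(n)\oplus\mathcal{O}(n)\to\mathcal{O}(n+1)\to0$), rather than on a single canonical resolution valid for every object. Your identification of the section of $\Gamma$ with the canonical sheaf functor $\psi_0$, and the computations $\Gamma\circ\psi_0\simeq\id$ and $\Gamma\circ\psi_{-1}\simeq\ast$, are correct and are the easy part.

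The genuine gap is the claimed short exact sequence of endofunctors $\psi_{-1}\circ A\coRarrow\id\quotRarrow\sigma\circ\Gamma$ on all of $\Proj_{fd}(M)$: it does not exist, and the problem is not one of ``judicious mapping-telescope manipulations.'' First, the canonical map relating an object to its global sections points the other way — it is the evaluation $\mathcal{O}\otimes H^0(F)\to F$ in the linear model — and in the non-linear model there is no natural map $\overline{Y}\to\psi_0(\Gamma(\overline{Y}))$ at all, since the middle term of $\psi_0(\Gamma(\overline{Y}))$ is a free $\ZZ$-object and a $\ZZ$-equivariant map out of $Y$ into it compatible with the $Y_\pm$-components cannot be produced naturally ($\Gamma(\overline Y)$ is a double mapping cone receiving a map \emph{from} $Y$). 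Second, even with the correct orientation $\psi_{-1}A\coRarrow\psi_0 B\quotRarrow\id$, such a resolution exists only on the subcategory of ``regular'' objects (linearly: sheaves generated by global sections with $H^1=0$; already $\mathcal{O}(-2)$ and $\mathcal{O}(n)$ for $n>0$ fail on one side or the other), so Quillen's argument requires the exhaustive filtration by regularity subcategories together with a cofinality/colimit step — this is the actual content of the proof and is absent from your outline. A smaller but real issue in the same vein: $\Gamma$ lands in $\CC_{sfd}(M)$ rather than $\CC_{fd}(M)$ (see the paper's definition of the global sections functor), so $\sigma\circ\Gamma$ is not an endofunctor of $\Proj_{fd}(M)$ without a suspension/cofinality correction. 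The conclusion you are aiming for is right, but the additivity input that is supposed to deliver it is not available in the form you propose.
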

To explain the maps we need some more definitions. These will also be important for the proof of the main theorem.
\begin{definition}\label{Definition: Global section functor}[(cf. e.g. \cite[5.1]{FTI})]
For $\overline{Y}=(Y_-,Y,Y_+)\in\Proj_{fd}(M)$ define an object $\Gamma(\overline{Y})$ in $\CC(M)$ as
\[
cY_-\cup_{Y_-}Y\cup_{Y_+}cY_+
\]
where $cY_\pm$ denote the cone $Y_\pm\wedge I$. This is a functor to $\CC_{sfd}(M)$ (cf. \cite[5.2]{FTI}), called the \textit{global sections} functor.
\end{definition}
\begin{definition}\label{Definition: Extension by scalars, twist and canonical sheaf}[(cf. e.g. \cite[5.3,5.4]{FTI})]
The \textit{extension by scalars} functors
\[
\CC_{fd}(M)\rightarrow\CC_{fd}(M\times\NN_+), \quad\CC_{fd}(M)\rightarrow\CC_{fd}(M\times\NN_-), \quad \CC_{fd}(M)\rightarrow\CC_{fd}(M\times\ZZ)
\]
take an $M$-space $Y$ to the $(M\times\NN_\pm)$-space $(\NN_\pm)_+\wedge Y$ resp. the $(M\times \ZZ)$-space $\ZZ_+\wedge Y$.\\

For an integer $n$ the \textit{twist} functor 
\[
\theta_n:\Proj_{fd}(M)\rightarrow\Proj_{fd}(M)
\]
takes $\overline{Y}=(Y_-,Y,Y_+)$ to
\[
Y_-\xrightarrow{t^n\circ a_-}Y\xleftarrow{\quad a_+\quad}Y_+
\]
or putting $\theta_n(a_-)=t^n\circ a_-$ to 
\[
\theta_n(\overline{Y})=(Y_-,Y,Y_+,\theta_n(a_-),a_+).
\] 
The functor
\[
\psi_0:\CC_{fd}(M)\rightarrow\Proj_{fd}(M)
\]
is defined by mapping an $M$-space $Y$ to its \textit{canonical sheaf} given by by applying the extension by scalars functor componentwise
\[
(\mathbb{N}_-)_+\wedge Y\hookrightarrow\mathbb{Z}_+\wedge Y\hookleftarrow(\mathbb{N}_+)_+\wedge Y.
\]
For an integer $n$ the functor 
\[
\psi_n:\CC_{fd}(M)\rightarrow\Proj_{fd}(M)
\]
takes $Y$ to its \textit{canonical sheaf twisted by n} $\theta_n\circ\psi_0(\overline{Y})$.
\end{definition}

\section{The Proof}
\subsection{}
Consider the category $\mathbb{D}_{fd}(M\times\mathbb{N}_+)$ of 2.1. Following \cite{FTI}, we make it into a Waldhausen category by declaring a map $(f_-,f,f_+)$ a weak equivalence if and only if $f_+$ is. As before denote the class of weak equivalences by $h$. We want also to introduce a coarser notion of weak equivalences $h_S$ given by those triples $(f_-,f,f_+)$ for which $f_+$ is an $\tilde{S}-$equivalence. Denote the corresponding class of weak equivalences by $h_S$. 
\begin{lemma}\label{Lemma: localisation of D(MxN) is R^S}
There is a natural (weak) homotopy equivalence
\[
K(h_S\mathbb{D}_{fd}(M\times \mathbb{N}_+))\xrightarrow{\simeq}K(\mathcal{R}^S(M)). 
\]
\end{lemma}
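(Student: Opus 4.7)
The plan is to invoke Waldhausen's approximation theorem applied to the exact forgetful (projection) functor
\[
p_+:\mathbb{D}_{fd}(M\times\NN_+)\longrightarrow\mathcal{R}^S(M),\qquad (Y_-,Y,Y_+)\longmapsto Y_+.
\]
The first approximation axiom---that a morphism $(f_-,f,f_+)$ is an $h_S$-equivalence if and only if $p_+(f_-,f,f_+)=f_+$ is an $\tilde{S}$-equivalence---is immediate from the very definition of $h_S$. The substance lies in the factorization axiom.

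For the factorization, I would proceed as follows. Given $\bar{Y}=(Y_-,Y,Y_+)\in\mathbb{D}_{fd}(M\times\NN_+)$ and a morphism $g:Y_+\to X$ in $\mathcal{R}^S(M)$, first replace $g$ by a cofibration through the mapping cylinder $Y_+\hookrightarrow Z\xrightarrow{\simeq}X$, the second map being a genuine weak equivalence and a fortiori an $\tilde{S}$-equivalence. It then suffices to extend $Y_+\hookrightarrow Z$ to a cofibration $\bar{Y}\hookrightarrow\bar{Y}'$ in $\mathbb{D}_{fd}(M\times\NN_+)$ with $p_+(\bar{Y}')=Z$. Set $Y'_+=Z$ and take
\[
Y'=Y\cup_{Y_+(t^{-1})}Z(t^{-1})
\]
as a pushout in $\CC_{fd}(M\times\ZZ)$, formed from the canonical telescope extensions of $a_+$ and of $Y_+\hookrightarrow Z$; this exhibits $Y\hookrightarrow Y'$ as a cofibration and supplies the required map out of $Z$.

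The main obstacle is the choice of $Y_-'\in\CC_{fd}(M\times\NN_-)$: I need a cofibration $Y_-\hookrightarrow Y_-'$ for which the structure map $Y_-'(t)\to Y'$ is a weak equivalence and the extra pushout condition defining $\mathbb{D}$-cofibrations is verified. The naive choice $Y_-'=Y_-$ fails outside the case where $Y_+\hookrightarrow Z$ is itself a weak equivalence, because the relative cofibre of $Y\hookrightarrow Y'$ is the telescope of $\hocofib(Y_+\to Z)$ with the $t$-action inverted. I would build $Y_-'$ by attaching to $Y_-$ an $\NN_-$-cofibrant model of this cofibre, chosen so that applying $(-)(t)$ recovers the missing piece, using that the telescope functor $\CC_{fd}(M\times\NN_-)\to\CC_{fd}(M\times\ZZ)$ is essentially surjective onto finitely dominated $\ZZ$-spaces. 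The bookkeeping here is the technical heart of the argument and calls for an explicit cell-by-cell construction in the spirit of the $\NN_-$-lifts used in \cite[\S5]{FTI}. Once $\bar{Y}'$ is in place, $p_+(\bar{Y}')=Z\xrightarrow{\simeq}X$ delivers the sought factorization and approximation yields the lemma. As a fallback, should the explicit construction of $Y_-'$ prove cumbersome, I would apply Waldhausen's fibration theorem to the pair $h\subset h_S$ on $\mathbb{D}_{fd}(M\times\NN_+)$ and to the analogous pair on $\mathcal{R}^S(M)$, reducing the problem to the unlocalized equivalence $K(h\mathbb{D}_{fd}(M\times\NN_+))\simeq K(\CC_{fd}(M\times\NN_+))$ available from \cite{FTI}, together with a comparison of the subcategories of acyclic objects via a simpler instance of approximation.
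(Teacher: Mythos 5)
Your fallback is, in fact, the paper's proof: the author applies Waldhausen's fibration theorem to the pair $h\subset h_S$ on $\mathbb{D}_{fd}(M\times\NN_+)$ and on $\CC_{fd}(M\times\NN_+)$ (the latter with $h_S$-equivalences being exactly $\mathcal{R}^S(M)$), compares the two resulting fibration sequences via the forgetful functor $F\colon(Y_-,Y,Y_+)\mapsto Y_+$, quotes \cite[4.5]{FTI} for the middle (unlocalized) vertical map, and observes that the restriction of $F$ to the subcategories of $h_S$-acyclic objects is an equivalence ``in the same way''; the equivalence on the third terms is then the statement of the lemma. So if you run that argument, you are on the paper's track, and the point to make explicit is only the comparison of the acyclic subcategories, which the paper also treats briefly.

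Your primary route --- a direct approximation argument for $p_+\colon h_S\mathbb{D}_{fd}(M\times\NN_+)\to\mathcal{R}^S(M)$ --- is genuinely different, and as written it has a real gap exactly where you flag it. The factorization axiom forces you to complete the cofibration $Y_+\hookrightarrow Z$ to a cofibration $\bar{Y}\hookrightarrow\bar{Y}'$ in $\mathbb{D}_{fd}(M\times\NN_+)$: besides producing $Y'$ and $Y'_-$, you must check the extra pushout conditions in the definition of $\mathbb{D}$-cofibrations, the defining structure-map condition for membership in $\mathbb{D}_{fd}(M\times\NN_+)$, and finite domination of the new pieces. Your sketch rests on the assertion that the telescope functor $\CC_{fd}(M\times\NN_-)\to\CC_{fd}(M\times\ZZ)$ is essentially surjective and that a suitable ``lattice'' $Y'_-$ extending $Y_-$ can be built cell by cell; neither is proved, and this lattice-construction bookkeeping is precisely the content that \cite[4.5]{FTI} already encapsulates. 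In other words, completing your primary route would amount to re-proving the unlocalized comparison of \cite{FTI} in a localized setting, which is why the paper instead reduces to that result via the fibration theorem. Either finish the explicit construction (translating attaching maps of the induced $\ZZ$-cells of $Y'$ rel $Y$ far enough down into $Y_-(t)$, and verifying the cofibration and finiteness conditions), or adopt your fallback, which is the argument the paper actually gives.
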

\begin{proof}
The categories $\mathbb{C}_{fd}(M\times\mathbb{N}_+)$ and $\mathbb{D}_{fd}(M\times\mathbb{N}_+)$ are equipped with two notions of weak equivalences respectively. Applying Waldhausen's fibrations theorem, we can consider the following diagram of homotopy fibrations of \linebreak$\mathcal{S}$-constructions
%\[
%\begin{tikzcd}%[column sep=huge]
%%[row sep=huge, column sep=huge]
%{|h\mathcal{S}_\bullet\mathbb{D}_{fd}(M\times \mathbb{N}_+)^{h_S}|} 
%\arrow[r] \arrow[d]& 
%{|h\mathcal{S}_\bullet\mathbb{D}_{fd}(M\times \mathbb{N}_+)|}
%\arrow[r] \arrow[d] & {|h_S\mathcal{S}_\bullet\mathbb{D}_{fd}(M\times \mathbb{N}_+)|}
%\arrow[d]\\
%{|h\mathcal{S}_\bullet\mathbb{C}_{fd}(M\times \mathbb{N}_+)^{h_S}|}
%\arrow[r] & {|h\mathcal{S}_\bullet\mathbb{C}_{fd}(M\times \mathbb{N}_+)|}
%\arrow[r]& {|h_S\mathcal{S}_\bullet\mathcal{R}^S(M)|}
%\end{tikzcd}
%\]
\[
\begindc{0}[5]
\obj(9,10)[1]{$|h\mathcal{S}_\bullet\mathbb{C}_{fd}(M\times \mathbb{N}_+)^{h_S}|$}
\obj(32,10)[2]{$|h\mathcal{S}_\bullet\mathbb{C}_{fd}(M\times \mathbb{N}_+)|$}
\obj(53,10)[3]{$|h_S\mathcal{S}_\bullet\mathcal{R}^S(M)|$}
\obj(9,19)[4]{$|h\mathcal{S}_\bullet\mathbb{D}_{fd}(M\times \mathbb{N}_+)^{h_S}|$}
\obj(32,19)[5]{$|h\mathcal{S}_\bullet\mathbb{D}_{fd}(M\times \mathbb{N}_+)|$}
\obj(53,19)[6]{$|h_S\mathcal{S}_\bullet\mathbb{D}_{fd}(M\times \mathbb{N}_+)|$}
\mor{1}{2}{$ $}[\atleft,\solidarrow]
\mor{2}{3}{$ $}[\atleft,\solidarrow]
\mor{4}{5}{$ $}[0,0]
\mor{5}{6}{$ $}[\atleft,\solidarrow]
\mor{5}{2}{$ $}[1,0]
\mor{4}{1}{$ $}[0,0]
\mor{6}{3}{$ $}[0,0]
\enddc
\]

induced by the exact forgetful functor $F:\mathbb{D}_{fd}(M\times\mathbb{N}_+)\rightarrow \mathbb{C}_{fd}(M\times\mathbb{N}_+)$, mapping $(Y_-,Y,Y_+)$ to $Y_+$. By \cite[4.5]{FTI} the middle vertical arrow is a homotopy equivalence. Being induced by the restriction of $F$ the left vertical map can in the same way be shown to be a homtopy equivalence. This proves the lemma.
\end{proof}
\subsection{}The proof of the main theorem will be built on the following
\begin{proposition}\label{End-sequence}
There is a homotopy fibration sequence
\[
K(\End_{fd}^S(M))\rightarrow K(\Proj_{fd}(M))\xrightarrow{\lambda} K(\mathcal{R}_{fd}^S(M))
\]
\end{proposition}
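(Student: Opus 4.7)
The plan is to apply Waldhausen's fibration theorem to the projective line category $\Proj_{fd}(M)$, equipped with two nested classes of weak equivalences: the ordinary class $h$ (where a morphism $(f_-,f,f_+)$ is an equivalence when $f_+$ is, the other components being forced by the structural condition defining $\Proj$) and a coarser class $h_S$ (where $f_+$ is only required to be an $\tilde{S}$-equivalence). Writing $\Proj_{fd}(M)^{h_S}$ for the full subcategory of $h_S$-acyclic objects with cofibrations and $h$-equivalences inherited from $\Proj_{fd}(M)$, the fibration theorem produces a homotopy fibration sequence
\[
K\bigl(\Proj_{fd}(M)^{h_S}\bigr)\longrightarrow K\bigl(\Proj_{fd}(M)\bigr)\xrightarrow{\lambda}K\bigl(h_S\Proj_{fd}(M)\bigr).
\]
It then remains to identify the outer terms with $K(\End^S_{fd}(M))$ and $K(\mathcal{R}^S_{fd}(M))$, respectively.

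For the right-hand identification, the projection $(Y_-,Y,Y_+)\mapsto Y_+$ is an exact functor from $\Proj_{fd}(M)$ with $h_S$-equivalences to $\mathcal{R}^S_{fd}(M)$; this is the map $\lambda$. An argument in the spirit of Lemma \ref{Lemma: localisation of D(MxN) is R^S}, assembling a projective line object from a given $Y_+$ via the telescope $Y_+(t^{-1})$ and an appropriate $Y_-$ produced by extension of scalars, together with the approximation theorem, should give a $K$-theory equivalence.

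For the left-hand identification, I would follow Grayson's original strategy and construct an exact functor $\Phi\colon\End^S_{fd}(M)\to\Proj_{fd}(M)^{h_S}$ sending a pair $(Y,f)$ to the homotopy cofibre of a self-map of the canonical sheaf $\psi_0(Y)$ encoding the difference $t-f$, i.e., a map into the twist $\theta_1\psi_0(Y)$ built out of multiplication by $t$ and the endomorphism $f$. The image is $h_S$-acyclic precisely because any polynomial $h\in S$ stably annihilating $f$ produces a null homotopy of $h(t)$ on the relevant telescope, forcing the $+$-component to be $\tilde{S}$-contractible.

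The hard part is showing that $\Phi$ induces an equivalence on $K$-theory, which I would attempt via Waldhausen's approximation theorem. One needs to verify that every $h_S$-acyclic projective line object is $h$-equivalent, along a cofibration, to one of the form $\Phi(Y,f)$, and that morphisms in $\Proj_{fd}(M)^{h_S}$ lift suitably through $\Phi$. This is precisely where Grayson's linear-algebraic argument requires a genuinely non-linear upgrade: one has to reconstruct an endomorphism out of an arbitrary $h_S$-acyclic $\Proj$-object by extracting a homotopy-coherent self-map from its transition data and verifying, via a careful telescope argument, that it is stably annihilated by a polynomial in $S$. I expect this reconstruction, rather than the fibration-theorem bookkeeping, to be the main obstacle.
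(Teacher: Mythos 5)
Your overall skeleton matches the paper's: equip $\Proj_{fd}(M)$ with the coarser $\tilde S$-equivalences, run Waldhausen's fibration theorem, identify the cofibre term with $K(\mathcal{R}^S_{fd}(M))$ by comparing with $h_S\mathbb{D}_{fd}(M\times\NN_+)$ and an approximation argument (the paper needs Cisinski's weak form, since the factorisation it produces via mapping cylinders is only up to weak equivalence), and your $\Phi$ is essentially the paper's characteristic-sequence functor (note that in the non-linear setting there is no map ``$t-f$''; the correct substitute is the homotopy coequaliser of the shift and $\id\wedge f$, and the polynomial forcing $\tilde S$-acyclicity of the $+$-component is the \emph{reverse} polynomial $\tilde h$, via $h(t^{-1})=\tilde h(t)\cdot t^{\deg h}$, not $h$ itself). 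The genuine gap is the step you yourself flag as the main obstacle and leave open: showing that $\Phi$ identifies $K(\End^S_{fd}(M))$ with the $K$-theory of the $h_S$-acyclic subcategory. Your proposed route through the approximation theorem (every acyclic object receives a cofibration from some $\Phi(Y,f)$, plus lifting of morphisms) is not what the paper does, and as stated it is not an argument: the whole difficulty is precisely how to produce the pair $(Y,f)$, and ``extracting a homotopy-coherent self-map from the transition data'' is left without content.

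The paper resolves this without any approximation theorem, by writing down an explicit inverse $\Psi$: for an acyclic object $\overline Y=(Y_-,Y,Y_+)$ one takes $(Y_-,t^{-1})$, i.e.\ the genuinely existing $t^{-1}$-action — no homotopy coherence is needed. Three points make this work, and they are exactly what your sketch is missing. First, $\tilde S$-contractibility of $Y_+$ forces $a_+$ to be a \emph{stable} equivalence (Lemma \ref{a_+ is equivalence}: a monic $\tilde g$ gives $\tilde g(t)\equiv\id$ on $Y_+/tY_+$, so $Y_+/tY_+$ is stably contractible and $t$ acts stably invertibly). Second, contractibility of the $\tilde g(t)$-telescope on $Y\simeq Y_+(t^{-1})$ translates, via the reverse polynomial $g$, into some $t^{-k}g^k(t^{-1})$ being $M$-nullhomotopic on $Y_-$, so $(Y_-,t^{-1})$ really lies in the endomorphism category with parameter $S$. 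Third — a finiteness issue your approximation route would also have to face — $Y_-$ is only \emph{stably finitely dominated}; the paper proves this by comparing the homotopy pullback of $Y_-\to Y\leftarrow Y_+$ with the global sections $\Gamma(\overline Y)$, and invokes cofinality so that landing in $\End^S_{sfd}$ suffices. With $\Psi$ in hand, $\Psi\circ\Phi\simeq\id$ follows from the homology equivalence $Y_f\simeq Y$ of \cite[4.1]{FTIII} and $\Phi\circ\Psi\simeq\id$ from the stable invertibility of $t$ above. Without this reconstruction (or an equally concrete substitute), your proof of the fibre identification — and hence of the proposition as stated — is incomplete.
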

\begin{proof}
Define a coarser class of weak equivalences on $\Proj_{fd}(M)$ by calling $\overline{f}:\overline{Y}\rightarrow\overline{Z}$ a weak equivalence if and only if $f_+:Y_+\rightarrow Z_+$ is a weak equivalence in $\mathcal{R}_{fd}^S(M)$ i.e. if $f_+$ is an $\tilde{S}$-equivalence. The corresponding Waldhausen category will be denoted by $h_{\NN_+}^{\tilde{S}}\Proj_{fd}(M)$. Applying Waldhausen's fibrations theorem gives a homotopy fibration sequence
\[
K(\Proj_{fd}^{h_{\NN_+}^{\tilde{S}}}(M))\rightarrow K(\Proj_{fd}(M))\rightarrow K(h_{\NN_+}^{\tilde{S}}\Proj_{fd}(M)) 
\]
We want to identify the third term with $K(\mathcal{R}_{fd}^S(M))$. By the preceding lemma, it is sufficient to show that the exact functor
\[
F:h_{\NN_+}^{\tilde{S}}\Proj_{fd}(M)\rightarrow h_S\mathbb{D}_{fd}(M\times\mathbb{N}_+)
\]
induced by inclusion, satisfies the approximation properties. Only the second property is of interest. Let $\bar{Y}$ be in $h_{\NN_+}^{\tilde{S}}\Proj_{fd}(M)$, $\bar{Z}$ in $h_S\mathbb{D}_{fd}(M\times\NN_+)$ and $\bar{f}:\bar{Y}\rightarrow\bar{Z}$ a morphism in $h_S\mathbb{D}_{fd}(M\times\NN_+)$. Consider the diagram

%\[
%\begin{tikzcd}[column sep=huge]
%%[row sep=huge, column sep=huge]
%{Y_-} 
%\arrow[r, "a_-"] \arrow[d, "f\circ a_-"]& 
%{Y} \arrow[d, "f"] \arrow[from=r]& 
%{Y_+}\arrow[d, "f_+"]\\
%{Z}
%\arrow[r, "="] & {Z} \arrow[from=r]
%& {Z_+} 
%\end{tikzcd}
%\]
\[
\begindc{0}[5]
\obj(9,10)[1]{$Z$}
\obj(22,10)[2]{$Z$}
\obj(36,10)[3]{$Z_+$}
\obj(9,19)[4]{$Y_-$}
\obj(22,19)[5]{$Y$}
\obj(36,19)[6]{$Y_+$}
\mor{1}{2}{$=$}[\atleft,\solidarrow]
\mor{3}{2}{$ $}[\atleft,\solidarrow]
\mor{4}{5}{$a_-$}[0,0]
\mor{6}{5}{$ $}[\atleft,\solidarrow]
%\mor{4}{1}{$ $}[0,0] %Want to get this morphism.
\mor{5}{2}{$f$}[1,0]
\mor{4}{1}{$f\circ a_-$}[1,0]
\mor{6}{3}{$f_+ $}[0,0]
\enddc
\]

and replace the vertical arrows by cofibrations
%\[
%\begin{tikzcd}%[column sep=huge]
%%[row sep=huge, column sep=huge]
%{Y_-} 
%\arrow[r, "a_-"] \arrow[d]& 
%{Y} \arrow[d] \arrow[from=r]& 
%{Y_+}\arrow[d]\\
%{\cyl(f)}
%\arrow[r, "="] & {\cyl(f)} \arrow[from=r]
%& {\cyl(f_+)} \end{tikzcd}
%\]
\[
\begindc{0}[5]
\obj(9,10)[1]{$\cyl(f)$}
\obj(22,10)[2]{$\cyl(f)$}
\obj(36,10)[3]{$\cyl(f_+)$}
\obj(9,19)[4]{$Y_-$}
\obj(22,19)[5]{$Y$}
\obj(36,19)[6]{$Y_+$}
\mor{1}{2}{$=$}[\atleft,\solidarrow]
\mor{3}{2}{$ $}[\atleft,\solidarrow]
\mor{4}{5}{$a_-$}[0,0]
\mor{6}{5}{$ $}[\atleft,\solidarrow]
%\mor{4}{1}{$ $}[0,0] %Want to get this morphism.
\mor{5}{2}{$ $}[1,0]
\mor{4}{1}{$ $}[1,0]
\mor{6}{3}{$ $}[0,0]
\enddc
\]

Now put $\bar{Y}'=(\cyl(f),\cyl(f),\cyl(f_+))$, $\bar{Z}'=(Z,Z,Z_+)$ and $g:\bar{Y}\rightarrow\bar{Y}'$. The map $g$ is a cofibration in $h_{\NN_+}^{\tilde{S}}\Proj_{fd}(M)$ and the square

%\[
%\begin{tikzcd}%[row sep=large]
%%[row sep=huge, column sep=huge]
%{F(\bar{Y})} 
%\arrow[r] \arrow[d, "F(g)"']& 
%{\bar{Z}} \arrow[d, "\simeq_{h_S}"] \\
%{F(\bar{Y}')}
%\arrow[r, "\simeq_{h_S}"'] & {\bar{Z}'}
%\end{tikzcd}
%\]
\[
\begindc{0}[5]
\obj(9,10)[1]{$F(\bar{Y}')$}
\obj(22,10)[2]{$\bar{Z}'$}
\obj(9,19)[4]{$F(\bar{Y})$}
\obj(22,19)[5]{$\bar{Z}$}
\mor{1}{2}{$\simeq_{h_S} $}[\atright,\solidarrow]
\mor{5}{2}{$\simeq_{h_S} $}[\atleft,\solidarrow]
\mor{4}{1}{$F(g)$}[0,0]
\mor{4}{5}{$ $}[0,0]
\enddc
\]

provides a factorisation of $\bar{f}$ up to weak equivalence. To apply Waldhausen's approximation theorem, this has to be made into a strict factorisation ($\bar{Z}'=\bar{Z}$). However since both Waldhausen categories involved are saturated and satisfy the cylinder axiom, we may apply the weaker version of the approximation theorem by Cisinski (cf. \cite[Prop. 2.14]{Cis10}) to deduce that $$K(F):K(h_{\NN_+}^{\tilde{S}}\Proj_{fd}(M))\rightarrow K(h_S\mathbb{D}_{fd}(M\times\mathbb{N}_+))$$ is a homotopy equivalence.

\end{proof}

\begin{lemma}\label{a_+ is equivalence}
 Let $\overline{Y}$ be in $\Proj_{fd}^{h^{\tilde{S}}_{\NN_+}}(M)$. The structure map $a_+$ is a stable homotopy equivalence, i.e. there exists an $N$ such that
 \[
\Sigma^Na_+:\Sigma^N Y_+\xrightarrow{\simeq}\Sigma^N Y 
 \]
 is a homotopy equivalence.
\end{lemma}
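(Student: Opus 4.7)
\medskip
\noindent\emph{Proof plan.} The hypothesis $\bar{Y}\in\Proj_{fd}^{h^{\tilde{S}}_{\NN_+}}(M)$ unpacks to two things: (i) since $\bar{Y}\in\Proj_{fd}(M)$ the structure map $a_+(t^{-1}):Y_+(t^{-1})\to Y$ is a weak equivalence in $\TT(M\times\ZZ)$, and (ii) the object $Y_+$ is $\tilde{S}$-contractible, i.e.\ there is $\tilde{h}\in\tilde{S}$ making $\hocolim\bigl(Y_+\xrightarrow{\tilde{h}(t)}Y_+\xrightarrow{\tilde{h}(t)}\cdots\bigr)$ stably contractible as an $M$-space. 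The strategy is to deduce from (ii) that $t:Y_+\to Y_+$ itself is an $M$-stable homotopy equivalence, and then compose with (i).

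\medskip

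First I will use finite domination of $Y_+$ to pass from ``the $\tilde{h}(t)$-telescope is stably contractible'' to ``$\tilde{h}(t)^n$ is stably nullhomotopic as an $M$-self-map'' for some $n\geq1$. After a single suspension to work with unstable maps, $\Sigma Y_+$ is a retract of a finite $M$-CW object, hence compact in the sense that $M$-maps out of it commute with filtered (homotopy) colimits. Applying $[\Sigma Y_+,-]_M$ to the telescope gives $\colim_{\tilde{h}(t)_\ast}[\Sigma Y_+,\Sigma Y_+]_M=0$, so the class of the identity dies at some finite stage, meaning $\tilde{h}(t)^n\simeq\ast$ stably as an $M$-map $Y_+\to Y_+$.

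\medskip

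Second, I exploit the crucial algebraic feature of reverse polynomials. Writing the monic $h(x)=x^d+a_{d-1}x^{d-1}+\cdots+a_0$, one has $\tilde{h}(x)=1+a_{d-1}x+\cdots+a_0x^d$, so $\tilde{h}(0)=1$ and hence $\tilde{h}(t)^n=1+t\cdot k(t)$ for some $k\in\ZZ[\mathcal{Z}\pi_0M][t]$. The previous step therefore yields a stable $M$-homotopy
\[
\id_{Y_+}\;\simeq\;-t\circ k(t)\;\simeq\;-k(t)\circ t,
\]
the two expressions agreeing because $t$ and $k(t)$ commute. Thus $-k(t)$ is a two-sided stable $M$-inverse of $t$, and $t:Y_+\to Y_+$ is a stable equivalence of $M$-spaces.

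\medskip

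Finally, once $t$ acts as a stable $M$-equivalence on $Y_+$, the canonical map from the first vertex of the telescope
\[
Y_+\;\longrightarrow\;\hocolim\bigl(Y_+\xrightarrow{t}Y_+\xrightarrow{t}\cdots\bigr)\;=\;Y_+(t^{-1})
\]
is a stable $M$-equivalence, and composition with $a_+(t^{-1}):Y_+(t^{-1})\wRarrow Y$ from (i) identifies this composite with $a_+$ up to stable $M$-equivalence. Choosing $N$ large enough to realise the stable inverse and all relevant nullhomotopies on a single suspension gives a genuine homotopy equivalence $\Sigma^N a_+:\Sigma^N Y_+\xrightarrow{\simeq}\Sigma^N Y$.

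\medskip

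The step I expect to require the most care is the first one: justifying that stable contractibility of the $\tilde{h}(t)$-telescope of a \emph{finitely dominated} (not finite) $M$-space upgrades to $\tilde{h}(t)^n$ being stably $M$-nullhomotopic. One has to be explicit about working with stable $M$-maps (as the preceding remark allows) and invoke compactness of retracts of finite cell $M$-complexes, rather than of finite complexes themselves.
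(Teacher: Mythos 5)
Your overall route is the same as the paper's: use that reverse polynomials have constant coefficient $1$ to show that the $t$-action on $Y_+$ is stably invertible, then identify $Y_+$ stably with its telescope $Y_+(t^{-1})$ and compose with the weak equivalence $a_+(t^{-1})\colon Y_+(t^{-1})\to Y$ supplied by the $\Proj_{fd}(M)$-condition. (The paper implements the middle step via the quotient $Y_+/tY_+$ and the cofibration sequence $tY_+\rightarrowtail Y_+\twoheadrightarrow Y_+/tY_+$, whereas you use the identity $\tilde h(t)^n=1+t\,k(t)$ directly; both rest on the same point, namely $\tilde h(0)=1$.)

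However, the step you yourself flag as delicate is justified incorrectly. You assert that $\Sigma Y_+$ is a retract of a finite $M$-CW object and hence that $[\Sigma Y_+,-]_M$ commutes with the telescope colimit. This is false in general: $Y_+$ is finitely dominated as an $M\times\NN_+$-space, not as an $M$-space. Already the extension $(\NN_+)_+\wedge Y$ of a finite $M$-space $Y$ is, as an $M$-space, an infinite wedge of copies of $Y$, so it is not dominated by a finite $M$-CW object and the $M$-equivariant compactness you invoke fails. The repair is to run the compactness argument equivariantly over $M\times\NN_+$: the maps $\tilde h(t)$ are (stable) $M\times\NN_+$-maps, so the telescope is a filtered homotopy colimit in $\TT(M\times\NN_+)$; since $Y_+$ is a retract of a finite $M\times\NN_+$-cell object, $[\Sigma^N Y_+,-]_{M\times\NN_+}$ commutes with this colimit, and stable contractibility of the telescope (a condition on the underlying space, hence equally valid in $\TT(M\times\NN_+)$) forces the class of the identity to die at a finite stage, giving $\tilde h(t)^n\simeq\ast$ as a stable $M\times\NN_+$-map, in particular as a stable $M$-map. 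With this correction the rest of your argument (the two-sided inverse $-k(t)$ for $t$, the stable equivalences $Y_+\to Y_+(t^{-1})\to Y$ identifying the composite with $a_+$, and the choice of a single suspension $N$) goes through; this finiteness argument is also exactly the one the paper uses, for $Y_-$, in the proof of Lemma \ref{Lemma:Third term is End}.
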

 \begin{proof}
Let $\tilde{g}$ be any polynomial with leading coefficient 1. The induced endomorphism $\tilde{g}(t)$ is mapped to the identity under the quotient map
\[
\{Y_+,Y_+\}\rightarrow \{Y_+/tY_+,Y_+/tY_+\}.
\]
Since $Y_+$ is stably annihilated by such a polynomial, the quotient $Y_+/tY_+$ is stably contractible.  Using the cofibration sequence
 \[
 tY_+\coRarrow Y_+\quotRarrow Y/tY_+
 \]
 we conclude that 
\[
 tY_+\rightarrow Y_+
\]
is a stable equivalence which implies that the action by $t$ is stably homotopy invertible. This implies that $Y_+$ is already stably homotopy equivalent to its telescope. %\marginpar{Is it convincing?}
 
\end{proof}

\begin{lemma}\label{Lemma:Third term is End}
 There is a natural homotopy equivalence of spectra
 \[
 K(\End^S_{fd}(M))\simeq K(\Proj^{h^{\tilde{S}}_{\NN_+}}(M)). 
 \]
 \end{lemma}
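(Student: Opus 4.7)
The plan is to construct a natural exact functor
\[
\Phi\colon \End^S_{fd}(M)\longrightarrow \Proj^{h^{\tilde{S}}_{\NN_+}}_{fd}(M)
\]
and to identify it, at the level of $K$-theory, as an equivalence by the saturated cylinder form of Waldhausen's approximation theorem, in direct analogy with the argument already deployed in the proof of Proposition \ref{End-sequence}. The functor will assign to $(Y,f)$ a projective-line object whose middle component $Y_\ZZ$ is the bi-infinite mapping telescope of $f$, whose $+$-component $Y_+$ is the half-telescope of $f$ regarded as an $(M\times \NN_+)$-cell complex with $t$ acting by shift of stages (so that $t$ corresponds to $f$ on the underlying $M$-space), and whose $-$-component $Y_-$ is the complementary half-telescope. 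The structure maps $a_\pm$ will be the canonical inclusions into the bi-infinite telescope, which become weak equivalences after applying $(\,\cdot\,)(t^{\mp 1})$.

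The central check will be that $\Phi(Y,f)$ lies in $\Proj^{h^{\tilde S}_{\NN_+}}_{fd}(M)$, i.e.\ that $Y_+$ is $\tilde{S}$-contractible. Given a monic $h\in S$ with $h[f]=0$ stably, I would examine the telescope $\hocolim \tilde h(t)$ of the reverse polynomial evaluated at $t$: under the identification of the $t$-action with $f$, this becomes a homotopy colimit on the underlying $M$-space whose triviality is forced by the stable vanishing of $h(f)$, up to a bookkeeping passage between $h$ and $\tilde h$ that mirrors the end of the proof of Lemma \ref{a_+ is equivalence}. Exactness of $\Phi$ is then formal, since mapping telescopes commute with pushouts along endomorphism maps and with suspensions.

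For the $K$-theory equivalence I would verify the approximation property for $\Phi$ in the weaker form due to Cisinski. Lemma \ref{a_+ is equivalence} is the key input: for any $\overline{Y}\in \Proj^{h^{\tilde S}_{\NN_+}}_{fd}(M)$ the map $a_+\colon Y_+\to Y_\ZZ$ is a stable equivalence and the $t$-action on $Y_+$ is stably homotopy invertible, so that after a sufficient suspension one can extract from $\overline{Y}$ an endomorphism object $(\Sigma^N Y_+,\Sigma^N t)\in \End^S_{fd}(M)$. Its image under $\Phi$ admits a canonical comparison back to $\overline{Y}$ that is an $h^{\tilde S}_{\NN_+}$-equivalence. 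Given an arbitrary $\overline{f}\colon \Phi(Y_0,f_0)\to\overline{Y}$, factoring through this extraction and replacing the comparison by a cofibration via the cylinder construction---exactly as at the end of the proof of Proposition \ref{End-sequence}---will verify the approximation property up to weak equivalence, and Cisinski's theorem then delivers the desired homotopy equivalence of $K$-theory spectra.

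The main obstacle I anticipate is the finiteness bookkeeping in the extraction step: an object of $\CC_{fd}(M\times\NN_+)$ need not be finitely dominated as a plain $M$-space, so obtaining $(\Sigma^N Y_+,\Sigma^N t)\in \End^S_{fd}(M)$ from $\overline{Y}$ requires combining the stable equivalence of Lemma \ref{a_+ is equivalence} with the $(M\times \ZZ)$-finite domination of $Y_\ZZ$. In tandem, one has to verify that the reverse polynomial arising from the $\tilde{S}$-contractibility of $Y_+$ translates, via the $t\leftrightarrow f$ correspondence, into a genuine element of $S$ stably annihilating the extracted endomorphism; this polynomial-level identification is the technical heart of the proof.
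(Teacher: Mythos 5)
There is a genuine gap, and it sits exactly at the step you call ``the central check'' and ``the technical heart'': your functor $\Phi$ puts the endomorphism on the wrong side of the $t\leftrightarrow t^{-1}$ symmetry. You let $t$ act on the $+$-component by $f$, so that $\tilde h(t)=\tilde h(f)$; but the hypothesis $h[f]=0$ with $h\in S$ gives no control over $\tilde h(f)$, and the telescope of $\tilde h(f)$ need not be trivial. The Nil case makes this concrete: for $S=\{x^n\}$ one has $\tilde h=1$, so your $Y_+$ would have to have stably contractible telescope of the \emph{identity}, i.e.\ be stably contractible itself --- false for a general nilpotent $(Y,f)$. The reversal $h\mapsto\tilde h$ corresponds to $t\mapsto t^{-1}$, so $f$ must be encoded as the $t^{-1}$-action. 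Accordingly the paper forms the characteristic cofibre $Y_f\in\CC_{fd}(M\times\NN_-)$ (homotopy coequaliser of the shift and $\id\wedge f$ on $(\NN_-)_+\wedge Y$), on which $t^{-1}$ acts via $f$, and sets $\Phi(Y,f)=(Y_f,Y_f(t),Y_f(t))$; on the localised telescope $t$ is invertible with $t^{-1}\simeq f$, whence $\tilde h(t)=h(t^{-1})t^{\deg h}\simeq 0$ and the $+$-component is genuinely $\tilde S$-contractible. The same directional error infects your extraction step: $(\Sigma^NY_+,\Sigma^Nt)$ is annihilated by an element of $\tilde S$, not of $S$, so it is not an object of $\End^S_{fd}(M)$ (again, in the Nil case it would force $Y_+$ to be stably trivial). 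The correct extraction is $(Y_-,t^{-1})$, for which the contractibility of the $g(t^{-1})$-telescope on $Y_-(t)\simeq Y$, together with the fact that $g(x^{-1})$ is not divisible by $x^{-1}$, yields a power $t^{-k}g^k(t^{-1})$ vanishing in $\{Y_-,Y_-\}$ with $x^kg^k\in S$.

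Two further points. First, your worry about finiteness is real but is resolved in the paper not for $Y_+$ but for $Y_-$: one shows $(Y_-,t^{-1})$ is \emph{stably} finitely dominated by comparing the homotopy pullback of $Y_-\to Y\leftarrow Y_+$ (which, by Lemma \ref{a_+ is equivalence}, is stably equivalent to $Y_-$) with the global sections $\Gamma(\overline{Y})$, which are stably finitely dominated; this is why the target of the inverse functor is $\End^S_{sfd}(M)$, harmless for $K$-theory. Second, your plan to conclude via Cisinski's approximation theorem is not what the paper does here (it constructs explicit functors $\Phi$ and $\Psi$ with $\Psi\circ\Phi\simeq\id$ and $\Phi\circ\Psi\simeq\id$, the latter using Lemma \ref{a_+ is equivalence} and \cite[4.1]{FTIII}); an approximation-theorem argument could in principle be made to work, but only after the directional error above is fixed, since both the essential surjectivity and the factorisation you describe rest on the flawed extraction.
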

\begin{proof}
Let $(Y,f)$ be an object in $\End_{fd}^S(M)$ and consider the \textit{characteristic sequence} of \cite[p. 3029]{FTIII}, i.e. the homotopy coequaliser
\[
(\NN_-)_+\wedge Y\rightrightarrows(\NN_-)_+\wedge Y\rightarrow Y_f
\]
with the top arrow given by the shift endomorphism and the bottom one by $\id\wedge f$. The ``cofibre'' $Y_f$ lies in $\CC_{fd}(M\times \NN_-)$ with $t^{-1}$ acting via $f$. Furthermore, since essentially per construction it is $S$-contractible there exists a $g$ stably homotopy annihilating $t^{-1}$. We observe that the action by $t^{-1}$ on the telescope $Y_f(t)$ is also stably homotopy annihilated by $g$. On the telescope, however, the action is invertible and because of the relation 
\[
g(t^{-1})=\tilde{g}(t)\cdot t^n
\]
$\tilde{g}$ stably homotopy annihilates the $t$-action on $Y_f(t)$. Hence we may view $Y_f(t)$ with the action by $t$ as an object in $\mathcal{R}_{fd}^S(M)$. Using the the preceding lemma one can now check that the correspondence
\[
(Y,f)\mapsto (Y_f,Y_f(t),Y_f(t)) 
\]
defines an exact functor $\Phi:\End^S_{fd}(M)\rightarrow\Proj^{h^{\tilde{S}}_{\NN_+}}(M)$.

Conversely consider now
\[
\overline{Y}=(Y_-,Y,Y_+)\in\Proj^{h^{\tilde{S}}_{\NN_+}}(M).  
\]
Since $Y_+$ is $\tilde{S}$-contractible there is a $\tilde{g}$ such that 
\[
\hocolim(\cdots\xrightarrow{\tilde{g}(t)}Y_+\xrightarrow{\tilde{g}(t)}Y_+\xrightarrow{\tilde{g}(t)}\cdots)
\]
is contractible and thus also 
\[
\hocolim(\cdots\xrightarrow{\tilde{g}(t)}Y\xrightarrow{\tilde{g}(t)}Y\xrightarrow{\tilde{g}(t)}\cdots)
\]
since $Y\simeq Y_+(t^{-1})$ and $\tilde{g}(t)$ lifts to a map of diagrams. As before we may then conclude that the reverse polynomial $g$ of $\tilde{g}$ stably homotopy annihilates the $t^{-1}$-action on $Y_-(t)\simeq Y(t)\cong Y$, where $g(x^{-1})=\tilde{g}(x)x^n$, i.e. that 
\[
\hocolim(\cdots\xrightarrow{g(t^{-1})}Y_-(t)\xrightarrow{g(t^{-1})}Y_-(t)\xrightarrow{g(t^{-1})}\cdots)\simeq\ast.
\]

Since $g(x^{-1})$ is not divisible by $x^{-1}$, the latter is equivalent to
\[
\hocolim(\cdots\xrightarrow{t^{-1}g(t^{-1})}Y_-\xrightarrow{t^{-1}g(t^{-1})}Y_-\xrightarrow{t^{-1}g(t^{-1})}\cdots) 
\]
and its contractibility implies that there exists a $k\geq 0$ such that
\[
t^{-k}g^k(t^{-1}):Y_-\rightarrow Y_- 
\]
is $M$-nullhomotopic.

Putting this together shows that $g'(t^{-1}):=t^{-k}g^{k}(t^{-1})$ vanishes in $\{Y_-,Y_-\}$ and $(Y_-,t^{-1})$ is an object of $\End^S(M)$ \textit{without} any finiteness conditions. We claim that $(Y_-,t^{-1})$ is in fact stably finitely dominated which is sufficient for the purposes of $K$-theory.
Consider the homotopy pullback of the following diagram taken in the category $\mathbb{C}(M)$
%\[
%\begin{tikzcd}%[row sep=large]
%%[row sep=huge, column sep=huge]
%& 
%{Y_+} \arrow[d, "a_+"] \\
%{Y_-}
%\arrow[r, "a_-"'] & {Y} %\arrow[from=u, "a_+"']
%\end{tikzcd}
%\]
\[
\begindc{0}[40]
\obj(1,1)[1]{$Y_-$} 
\obj(2,1)[2]{$Y$}
\obj(2,2)[3]{$Y_+$}
%\obj(2,1)[4]{$Y'$}
\mor{1}{2}{$a_-$}[-1,0]
%\mor{2}{4}{}[1,0]
\mor{3}{2}{$a_+$}[1,0]
%\mor{3}{4}{$f'$}
\enddc 
\]

which we denote by $D$. Since we are working stably $\Sigma D$ is homotopy equivalent (in $\mathbb{C}(M)$) to the homotopy cofibre of $Y_-\vee Y_+\rightarrow Y$ which in turn is equivalent to the global sections $\Gamma(\overline{Y})$ of $\overline{Y}=(Y_-\xrightarrow{a_-}Y\xleftarrow{a_+}Y_+)$ (cf. \cite[5.1]{FTI}). Since $\overline{Y}$ is finitely dominated and $\Gamma$ maps finitely dominated objects to stably finitely dominated ones we conclude that $D$ is stably finitely dominated. By \ref{a_+ is equivalence}, $a_+$ is a stable equivalence and the homotopy pullback square
%\[
%\begin{tikzcd}%[row sep=large]
%%[row sep=huge, column sep=huge]
%D\arrow[r]\arrow[d, "\simeq"'] & 
%{Y_+} \arrow[d, "a_+", "\simeq"'] \\
%{Y_-}
%\arrow[r, "a_-"'] & {Y} %\arrow[from=u, "a_+"']
%\end{tikzcd}
%\]
\[
\begindc{0}[50]
\obj(1,1)[1]{$Y_-$} 
\obj(2,1)[2]{$Y$}
\obj(2,2)[3]{$Y_+$}
\obj(1,2)[4]{$D$}
\mor{1}{2}{$a_-$}[-1,0]
\mor{4}{1}{$\simeq$}[-1,0]
\mor{3}{2}{$a_+$}[1,0]
\mor{3}{2}{$\simeq$}[-1,0]
\mor{4}{3}{$ $}
\enddc 
\]

shows the claim. The assignment
\[
(Y_-,Y,Y_+)\mapsto (Y_-,t^{-1}) 
\]
defines an exact functor $\Psi:\Proj^{h^{\tilde{S}}_{\NN_+}}(M)\rightarrow \End^S_{sfd}(M)$. The composition $\Psi\circ\Phi$ sends $(Y,f)$ to $(Y_f,f)$.  
Since by \cite[4.1]{FTIII} $Y_f$ is homology equivalent to $Y$ we have $\Psi\circ\Phi\simeq\id$. Because of \ref{a_+ is equivalence} the composition $\Phi\circ\Psi$ is also equivalent to $\id$.
\end{proof}
\subsection{}
\begin{proof}[Proof of \ref{Main Theorem}]
According to Prop. \ref{Proposition: Decomposition of K-theory of the projective line}  we have a fibration
\[
K(\CC_{fd}(M))\xrightarrow{\phantom{i}\psi_{-1}\phantom{i}} K(\Proj_{fd}(M))\xrightarrow{\phantom{\psi}\Gamma\phantom{\psi}} K(\CC_{fd}(M)).
\]
Combining this with the sequence provided by \ref{End-sequence} we look at the diagram
%\[
%\begin{tikzcd}[row sep=large]
%%[row sep=huge, column sep=huge]
%&{\Omega K(\mathcal{R}_{fd}^S(M))}\arrow[d]&\\
%\tilde{K}(\End_{fd}^S(M))\arrow[r, dashed] &{K(\End_{fd}^S(M))}\arrow[d, "\Phi"]\arrow[r, "\mathrm{p}"]&{K(\CC_{fd}(M))}\\
%{K(\CC_{fd}(M))}\arrow[r, "\psi_{-1}"]&{K(\Proj_{fd}(M))}\arrow[r, "\Gamma"]&{K(\CC_{fd}(M))}
%\end{tikzcd}
%\] 
 \[
\begindc{0}[5]
\obj(9,10)[1]{$\phantom{n}K(\CC_{fd}(M))\phantom{d}$}
\obj(28,10)[2]{$K(\Proj_{fd}(M))$}
\obj(46,10)[3]{$K(\CC_{fd}(M))$}
\obj(9,19)[4]{$\tilde{K}(\End_{fd}^S(M))$}
\obj(28,19)[5]{$K(\End_{fd}^S(M))$}
\obj(46,19)[6]{$K(\CC_{fd}(M))$}
\obj(28,28)[7]{$\Omega K(\mathcal{R}_{fd}^S(M))$}

\mor{1}{2}{$\psi_{-1}$}[\atleft,\solidarrow]
\mor{2}{3}{$\Gamma $}[\atleft,\solidarrow]
\mor{4}{5}{$ $}[0,1]
\mor{5}{6}{$\mathrm{p}$}[\atleft,\solidarrow]
%\mor{4}{1}{$ $}[0,0] %Want to get this morphism.
\mor{5}{2}{$\Phi$}[1,0]
\mor{7}{5}{$ $}[0,0]
%\mor{6}{3}{$id$}[0,0]
\enddc
\]
 
We would like to complete the left hand square to a pullback square. For this we claim that the dashed map is the inclusion of the fibre i.e. that
\[
\tilde{K}(\End_{fd}^S(M))\xrightarrow{\phantom{\Gamma\circ\Phi}} K(\End_{fd}^S(M))\xrightarrow{\Gamma\circ\Phi} K(\CC_{fd}(M))
\] 
is a homotopy fibration sequence. The projection functor $\mathrm{p}:\End_{fd}^S(M)\rightarrow\CC_{fd}(M)$ has a natural splitting functor $\mathrm{j}$ given by sending $Y$ to $(Y,\ast)$. We are going to show that $\mathrm{j}$ also provides a (stable) splitting for $\Gamma\circ\Phi$ which will imply the claim.\linebreak

\noindent The space $Y$ is mapped by $\Phi\circ \mathrm{j}$ to the triple 
\[
(Y_\ast,Y_\ast(t),Y_\ast(t)) 
\]
where as in the proof of \ref{Lemma:Third term is End} 
$Y_\ast$ is given by the homotopy coequalizer sequence\linebreak
\[
(\mathbb{N}_-)_+\wedge Y\rightrightarrows(\mathbb{N}_-)_+\wedge Y\rightarrow Y_\ast
\]
of the shift action and the summandwise trivial map $\ast$. This is mapped via $\Gamma$ to\linebreak
\[
cY_\ast\cup_{Y_{\ast}}Y_{\ast}(t)  \cup_{Y_{\ast}(t)}
cY_{\ast}(t)
\]
which contracts to one copy of $\Sigma Y_\ast$. In the standard model for the coequaliser given by the double mapping cylinder, each cylinder summand on $Y$ can be collapsed to the image of $\ast(Y)=\ast$. This provides a natural homotopy equivalence from $Y_\ast$ to $Y$ and thus from $\Sigma Y_{\ast}$ to $\Sigma Y$. Since suspension induces an equivalence on $K$-theory this shows $\Gamma\circ\Phi\circ \mathrm{j}\simeq \id $.\\

\noindent Thus by the claim we end up with a commutative diagram of fibration sequences

%\[
%\begin{tikzcd}[row sep=large]
%%[row sep=huge, column sep=huge]
%\tilde{K}(\End_{fd}^S(M))\arrow[r]\arrow[d] &{K(\End_{fd}^S(M))}\arrow[d, "\Phi"]\arrow[r, "\Gamma\circ\Phi"]&{K(\CC_{fd}(M))}\arrow[d, "\id"]\\
%{K(\CC_{fd}(M))}\arrow[r, "\psi_{-1}"]&{K(\Proj_{fd}(M))}\arrow[r, "\Gamma"]&{K(\CC_{fd}(M))}
%\end{tikzcd}
%\] 
 \[
\begindc{0}[5]
\obj(9,10)[1]{$K(\CC_{fd}(M))$}
\obj(28,10)[2]{$K(\Proj_{fd}(M))$}
\obj(46,10)[3]{$K(\CC_{fd}(M))$}
\obj(9,19)[4]{$\tilde{K}(\End_{fd}^S(M))$}
\obj(28,19)[5]{$K(\End_{fd}^S(M))$}
\obj(46,19)[6]{$K(\CC_{fd}(M))$}
%\obj(28,28)[7]{$\Omega K(\mathcal{R}_\mathcal{C})$}

\mor{1}{2}{$\psi_{-1}$}[\atleft,\solidarrow]
\mor{2}{3}{$\Gamma $}[\atleft,\solidarrow]
\mor{4}{5}{$ $}[0,0]
\mor{5}{6}{$\Gamma\circ\Phi $}[\atleft,\solidarrow]
%\mor{4}{1}{$ $}[0,0] %Want to get this morphism.
\mor{5}{2}{$\Phi$}[1,0]
\mor{4}{1}{$ $}[0,0]
\mor{6}{3}{$\id$}[0,0]
\enddc
\]

and consequently with a homotopy (co)cartesian square
\[
\begindc{0}[5]
\obj(10,10)[1]{$K(\CC_{fd}(M))$}
\obj(28,10)[2]{$K(\Proj_{fd}(M))$}
%\obj(44,10)[3]{$K(\mathcal{C}^{hf})$}
\obj(10,19)[4]{$\tilde{K}(\End_{fd}^S(M))$}
\obj(28,19)[5]{$K(\End_{fd}^S(M))$}
%\obj(44,19)[6]{$K(\mathcal{C}^{hf})$}
%\obj(28,28)[7]{$\Omega K(\mathcal{R}_\mathcal{C})$}

\mor{1}{2}{$\psi_{-1}$}[\atleft,\solidarrow]
%\mor{2}{3}{$\Gamma $}[\atleft,\solidarrow]
\mor{4}{5}{$ $}[0,0]
%\mor{5}{6}{$ $}[\atleft,\solidarrow]
%\mor{4}{1}{$ $}[0,0] %Want to get this morphism.
\mor{5}{2}{$\Phi$}[1,0]
%\mor{7}{5}{$ $}[0,0]
\mor{4}{1}{$ $}[0,0]
\enddc
\]

%\[
%\begin{tikzcd}[row sep=large]
%%[row sep=huge, column sep=huge]
%\tilde{K}(\End_{fd}^S(M))\arrow[r]\arrow[d] &{K(\End_{fd}^S(M))}\arrow[d, "\Phi"]\\
%{K(\CC_{fd}(M))}\arrow[r, "\psi_{-1}"]&{K(\Proj_{fd}(M))}
%\end{tikzcd}
%\] 

which provides a fibration 
\[
\tilde{K}(\End_{fd}^S(M))\xrightarrow{\qquad} K(\CC_{fd}(M))\xrightarrow{\lambda\circ\psi_{-1}} K(\mathcal{R}_{fd}^S(M))
\]
with $\lambda$ the localisation of \ref{End-sequence}
\[
\lambda:K(\Proj_{fd}(M))\hookrightarrow K(h^{\tilde{S}}_{\mathbb{N}_+}\Proj_{fd}(M))\xrightarrow{\;\simeq\;}K(\mathcal{R}_{fd}^S(M)).
\]
The projection functor $\mathrm{p}:\mathcal{R}_{fd}^S(M)\rightarrow\CC_{fd}(M)$ maps an $M\times\mathbb{N}$-space to its $\NN$-orbit and can be identified with the augmentation functor $\varepsilon$ of\linebreak \cite[7.1]{FTI}. Applying \cite[7.2]{FTI} shows that $\mathrm{p}$ provides a splitting for $\lambda\circ\psi_{-1}$. Consequently the above fibration can be extended to the right
\[
\tilde{K}(\End_{fd}^S(M))\xrightarrow{\qquad} K(\CC_{fd}(M))\xrightarrow{\lambda\circ\psi_{-1}} K(\mathcal{R}_{fd}^S(M))\xrightarrow{\qquad}E^S_{fd}(M)
\]
with $E^S_{fd}(M)=\tilde{K}(\mathcal{R}_{fd}^S(M))$. This finishes the proof.
\end{proof}

\bibliographystyle{amsalpha}
\bibliography{Bibliography}
\end{document}